\newtheorem{theorem}{Theorem}[section]
\newtheorem{lemma}[theorem]{Lemma}
\newtheorem{corollary}[theorem]{Corollary}
\newtheorem{remark}[theorem]{Remark}
\newtheorem{definition}[theorem]{Definition}
\numberwithin{equation}{section}
\acrodef{fom}[FOM]{full-order model}
\acrodef{rom}[ROM]{reduced-order model}
\acrodef{strom}[StROM]{structured \ac{rom}}
\acrodef{diagstrom}[D-StROM]{diagonal \ac{strom}}
\acrodef{siso}[SISO]{single-input single-output}
\acrodef{mimo}[MIMO]{multiple-input multiple-output}
\acrodef{lti}[LTI]{linear time-invariant}
\acrodef{ph}[pH]{port-Hamiltonian}
\acrodef{irka}[IRKA]{iterative rational Krylov algorithm}
\newcommand{\nfom}{\ensuremath{n}}
\newcommand{\nin}{\ensuremath{n_{\textnormal{i}}}}
\newcommand{\nout}{\ensuremath{n_{\textnormal{o}}}}
\newcommand{\yf}{\ensuremath{H}}
\newcommand{\nrom}{\ensuremath{r}}
\newcommand{\Ar}{\ensuremath{\hat{\mathcal{A}}}}
\newcommand{\Br}{\ensuremath{\hat{\mathcal{B}}}}
\newcommand{\Cr}{\ensuremath{\hat{\mathcal{C}}}}
\newcommand{\cAr}{\ensuremath{\hat{A}}}
\newcommand{\cBr}{\ensuremath{\hat{B}}}
\newcommand{\cCr}{\ensuremath{\hat{C}}}
\newcommand{\car}{\ensuremath{\hat{\alpha}}}
\newcommand{\cbr}{\ensuremath{\hat{\beta}}}
\newcommand{\ccr}{\ensuremath{\hat{\gamma}}}
\newcommand{\qAr}{\ensuremath{q_{\Ar}}}
\newcommand{\qBr}{\ensuremath{q_{\Br}}}
\newcommand{\qCr}{\ensuremath{q_{\Cr}}}
\newcommand{\xr}{\ensuremath{\hat{X}}}
\newcommand{\yr}{\ensuremath{\hat{H}}}
\newcommand{\xrd}{\ensuremath{\xr_{d}}}
\newcommand{\hA}{\ensuremath{\hat{A}}}
\newcommand{\hB}{\ensuremath{\hat{B}}}
\newcommand{\hC}{\ensuremath{\hat{C}}}
\newcommand{\hE}{\ensuremath{\hat{E}}}
\newcommand{\hG}{\ensuremath{\hat{G}}}
\newcommand{\hH}{\ensuremath{\hat{H}}}
\newcommand{\hI}{\ensuremath{\hat{I}}}
\newcommand{\hJ}{\ensuremath{\hat{J}}}
\newcommand{\hK}{\ensuremath{\hat{K}}}
\newcommand{\hM}{\ensuremath{\hat{M}}}
\newcommand{\hR}{\ensuremath{\hat{R}}}
\newcommand{\hS}{\ensuremath{\hat{S}}}
\newcommand{\hT}{\ensuremath{\hat{T}}}
\newcommand{\hX}{\ensuremath{\hat{X}}}
\newcommand{\hY}{\ensuremath{\hat{Y}}}
\newcommand{\hx}{\ensuremath{\hat{x}}}
\newcommand{\hy}{\ensuremath{\hat{y}}}
\newcommand{\hSigma}{\ensuremath{\hat{\Sigma}}}
\newcommand{\obj}{\ensuremath{\mathcal{J}}}
\newcommand{\cH}{\ensuremath{\mathcal{H}}}
\newcommand{\cL}{\ensuremath{\mathcal{L}}}
\newcommand{\Htwo}{\ensuremath{\cH_{2}}}
\newcommand{\htwo}{\ensuremath{h_{2}}}
\newcommand{\Ltwo}{\ensuremath{\cL_{2}}}
\newcommand{\Hinf}{\ensuremath{\cH_{\infty}}}
\newcommand{\Linf}{\ensuremath{\cL_{\infty}}}
\newcommand{\HtwoLtwo}{\ensuremath{\cH_{2} \otimes \cL_{2}}}
\newcommand{\pp}{\ensuremath{\mathsf{p}}}
\newcommand{\pset}{\ensuremath{\mathcal{P}}}
\newcommand{\npar}{\ensuremath{n_{\pp}}}
\newcommand{\CC}{\ensuremath{\mathbb{C}}}
\newcommand{\RR}{\ensuremath{\mathbb{R}}}
\newcommand{\CCpar}{\ensuremath{\CC^{\npar}}}
\newcommand{\CCi}{\ensuremath{\CC^{\nin}}}
\newcommand{\CCo}{\ensuremath{\CC^{\nout}}}
\newcommand{\CCoi}{\ensuremath{\CC^{\nout \times \nin}}}
\newcommand{\CCor}{\ensuremath{\CC^{\nout \times \nrom}}}
\newcommand{\CCri}{\ensuremath{\CC^{\nrom \times \nin}}}
\newcommand{\CCro}{\ensuremath{\CC^{\nrom \times \nout}}}
\newcommand{\CCrr}{\ensuremath{\CC^{\nrom \times \nrom}}}
\newcommand{\RRf}{\ensuremath{\RR^{\nfom}}}
\newcommand{\RRi}{\ensuremath{\RR^{\nin}}}
\newcommand{\RRo}{\ensuremath{\RR^{\nout}}}
\newcommand{\RRff}{\ensuremath{\RR^{\nfom \times \nfom}}}
\newcommand{\RRfi}{\ensuremath{\RR^{\nfom \times \nin}}}
\newcommand{\RRof}{\ensuremath{\RR^{\nout \times \nfom}}}
\newcommand{\RRor}{\ensuremath{\RR^{\nout \times \nrom}}}
\newcommand{\RRri}{\ensuremath{\RR^{\nrom \times \nin}}}
\newcommand{\RRrr}{\ensuremath{\RR^{\nrom \times \nrom}}}
\DeclarePairedDelimiter{\myparen}{\lparen}{\rparen}
\DeclarePairedDelimiter{\abs}{\lvert}{\rvert}
\DeclarePairedDelimiter{\norm}{\lVert}{\rVert}
\DeclarePairedDelimiterXPP{\normtwo}[1]{}{\lVert}{\rVert}{_{2}}{#1}
\DeclarePairedDelimiterXPP{\normF}[1]{}{\lVert}{\rVert}{_{\operatorname{F}}}{#1}
\DeclarePairedDelimiterXPP{\normHinf}[1]{}{\lVert}{\rVert}{_{\Hinf}}{#1}
\DeclarePairedDelimiterXPP{\normHtwoLtwo}[1]{}{\lVert}{\rVert}{_{\HtwoLtwo}}{#1}
\DeclarePairedDelimiterXPP{\normHtwo}[1]{}{\lVert}{\rVert}{_{\Htwo}}{#1}
\DeclarePairedDelimiterXPP{\normLinfLtwo}[1]{}{\lVert}{\rVert}{_{\Linf \otimes \Ltwo}}{#1}
\DeclarePairedDelimiterXPP{\normLinfmu}[1]{}{\lVert}{\rVert}{_{\Linf(\pset, \measure)}}{#1}
\DeclarePairedDelimiterXPP{\normLinf}[1]{}{\lVert}{\rVert}{_{\Linf}}{#1}
\DeclarePairedDelimiterXPP{\normLtwomu}[1]{}{\lVert}{\rVert}{_{\Ltwo(\pset, \measure)}}{#1}
\DeclarePairedDelimiterXPP{\normLtwo}[1]{}{\lVert}{\rVert}{_{\Ltwo}}{#1}
\DeclarePairedDelimiterXPP{\normhtwo}[1]{}{\lVert}{\rVert}{_{\htwo}}{#1}
\DeclarePairedDelimiterXPP{\ip}[2]{}{\langle}{\rangle}{}{#1, #2}
\DeclarePairedDelimiterXPP{\ipF}[2]{}{\langle}{\rangle}{_{\operatorname{F}}}{#1, #2}
\DeclarePairedDelimiterXPP{\ipHtwo}[2]{}{\langle}{\rangle}{_{\Htwo}}{#1, #2}
\DeclarePairedDelimiterXPP{\iphtwo}[2]{}{\langle}{\rangle}{_{\htwo}}{#1, #2}
\DeclarePairedDelimiterXPP{\ipLtwo}[2]{}{\langle}{\rangle}{_{\Ltwo}}{#1, #2}
\DeclarePairedDelimiterXPP{\ipLtwomu}[2]{}{\langle}{\rangle}{_{\Ltwo(\pset, \measure)}}{#1, #2}
\DeclarePairedDelimiterXPP{\mydiag}[1]{\operatorname{diag}}{\lparen}{\rparen}{}{#1}
\DeclarePairedDelimiterXPP{\trace}[1]{\operatorname{tr}}{\lparen}{\rparen}{}{#1}
\DeclarePairedDelimiterXPP{\vecop}[1]{\operatorname{vec}}{\lparen}{\rparen}{}{#1}
\DeclarePairedDelimiterXPP{\Real}[1]{\operatorname{Re}}{\lparen}{\rparen}{}{#1}
\DeclarePairedDelimiterXPP{\dist}[1]{\operatorname{dist}}{\lparen}{\rparen}{}{#1}
\DeclarePairedDelimiterXPP{\len}[1]{\operatorname{length}}{\lparen}{\rparen}{}{#1}
\DeclarePairedDelimiterXPP{\myspan}[1]{\operatorname{span}}{\lbrace}{\rbrace}{}{#1}
\newcommand{\measure}{\ensuremath{\mu}}
\DeclareMathOperator{\dif}{d\!}
\DeclareMathOperator*{\esssup}{ess\,sup}
\DeclareMathOperator{\Res}{Res}
\DeclareMathOperator{\Proj}{Proj}
\newcommand{\difm}[1]{\ensuremath{\dif{\measure(#1)}}}
\newcommand{\fundef}[3]{\ensuremath{#1 \colon #2 \to #3}}
\newcommand{\tran}{^{\operatorname{T}}}
\newcommand{\herm}{^{*}}
\newcommand{\mherm}{^{-*}}
\newcommand{\imag}{\boldsymbol{\imath}}
\definecolor{mplC0}{HTML}{1F77B4}
\definecolor{mplC1}{HTML}{FF7F0E}
\definecolor{mplC2}{HTML}{2CA02C}
\definecolor{mplC3}{HTML}{D62728}
\definecolor{mplC4}{HTML}{9467BD}
\definecolor{mplC5}{HTML}{8C564B}
\definecolor{mplC6}{HTML}{E377C2}
\definecolor{mplC7}{HTML}{7F7F7F}
\definecolor{mplC8}{HTML}{BCBD22}
\definecolor{mplC9}{HTML}{17BECF}
\pgfplotsset{compat=1.10}
\let\hat\widehat%
\let\le\leqslant%
\let\ge\geqslant%
\begin{document}

\title{Interpolatory \texorpdfstring{$\Htwo$}{H2}-optimality Conditions for
  Structured Linear Time-invariant Systems\thanks{%
    This work was partially funded by the U.S. National Science Foundation under
    grant DMS-1923221.
  }}

\author{%
  \href{https://orcid.org/0000-0002-9437-7698}{%
    \includegraphics[scale=0.06]{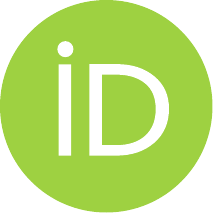}}\hspace{1mm}Petar~Mlinari\'c%
  \thanks{Department of Mathematics, Virginia Tech, Blacksburg, VA 24061
    (\texttt{mlinaric@vt.edu}).}
	\And
  \href{https://orcid.org/0000-0003-3362-4103}{%
    \includegraphics[scale=0.06]{orcid.pdf}}\hspace{1mm}Peter~Benner%
  \thanks{Computational Methods in Systems and Control Theory,
    Max Planck Institute for Dynamics of Complex Technical Systems,
    39106 Magdeburg,
    Germany
    (\texttt{benner@mpi-magdeburg.mpg.de}).}
	\And
	\href{https://orcid.org/0000-0003-4564-5999}{%
    \includegraphics[scale=0.06]{orcid.pdf}}\hspace{1mm}Serkan~Gugercin%
  \thanks{Department of Mathematics and Division of Computational Modeling and
    Data Analytics, Academy of Data Science, Virginia Tech, Blacksburg, VA 24061
    (\texttt{gugercin@vt.edu}).}
}

\renewcommand{\headeright}{Preprint}
\renewcommand{\undertitle}{Preprint}
\renewcommand{\shorttitle}{Interpolatory Structured $\Htwo$-optimality
  Conditions}

\hypersetup{
  pdftitle={Interpolatory H2-optimality Conditions for Structured Linear
    Time-invariant Systems},
  pdfauthor={P. Mlinari\'c, P. Benner, and S. Gugercin}
}

\maketitle

\begin{abstract}
  Interpolatory necessary optimality conditions for
$\Htwo$-optimal reduced-order modeling of
unstructured linear time-invariant (LTI) systems are well-known.
Based on previous work on $\Ltwo$-optimal reduced-order modeling of
stationary parametric problems, in this paper
we develop and investigate optimality conditions for
$\Htwo$-optimal reduced-order modeling of
structured LTI systems, in particular, for
second-order, port-Hamiltonian, and time-delay systems.
Under certain diagonalizability assumptions,
we show that across all these different structured settings,
bitangential Hermite interpolation is the common form for optimality,
thus proving a unifying optimality framework for
structured reduced-order modeling.

\end{abstract}

\keywords{%
  rational interpolation \and
  approximation theory \and
  model order reduction \and
  linear time-invariant systems \and
  optimization \and
  $\Htwo$ norm
}

\section{Introduction}
\Ac{lti} dynamical systems are ubiquitous in many applications and
can often be represented in a finite-dimensional, unstructured state-space form
\begin{subequations}\label{eq:lti-fom}
  \begin{align}
    \dot{x}(t) & = A x(t) + B u(t), \\*
    y(t)       & = C x(t),
  \end{align}
\end{subequations}
where
$A \in \RRff$,
$B \in \RRfi$, and
$C \in \RRof$;
$x(t) \in \RRf$ are the states (internal degrees of freedom) of the system,
$u(t) \in \RRi$ are the inputs (forcing), and
$y(t) \in \RRo$ are the outputs (quantities of interest).
Given the \ac{lti} system~\eqref{eq:lti-fom}
(called \iac{fom}),
the goal of (unstructured) $\Htwo$-optimal reduced-order modeling is to find
\iac{rom} of order~$\nrom \ll \nfom$ given in the state-space form as
\begin{subequations}\label{eq:lti-rom}
  \begin{align}
    \dot{\hx}(t) & = \hA \hx(t) + \hB u(t), \\*
    \hy(t)       & = \hC \hx(t),
  \end{align}
\end{subequations}
with
$\hA \in \RRrr$,
$\hB \in \RRri$, and
$\hC \in \RRor$
such that the \ac{rom}~\eqref{eq:lti-rom} minimizes the squared $\Htwo$ error
\begin{equation}\label{eq:h2-error}
  \normHtwo*{H - \hH}^2
  = \frac{1}{2 \pi}
  \int_{-\infty}^{\infty}
  \normF*{H(\imag \omega) - \hH(\imag \omega)}^2
  \dif{\omega},
\end{equation}
where $H$ and $\hH$ are \emph{the transfer functions} of
the \ac{fom} and the \ac{rom},
respectively, and are given by
\begin{equation*}
  H(s) = C \myparen{s I - A}^{-1} B
  \ \text{ and } \
  \hH(s) = \hC \myparen*{s \hI - \hA}^{-1} \hB,
\end{equation*}
with $I \in \RRff$ and $\hI \in \RRrr$ being identity matrices.
In particular, transfer functions satisfy
\begin{equation*}
  Y(s) = H(s) U(s)
  \ \text{ and } \
  \hY(s) = \hH(s) U(s),
\end{equation*}
where $U, Y, \hY$ are Laplace transforms of the input and output signals
$u, y, \hy$.
Additional assumptions are that $A$ and $\hA$ have eigenvalues in $\CC_-$
(the open left half-plane),
ensuring that $H$ and $\hH$ have finite $\Htwo$ norms and
are analytic over $\CC_+$ (the open right half-plane).
Finding \iac{rom} that minimizes the $\Htwo$ error is motivated by the $\Linf$
output error bound, namely
\begin{equation*}
  \normLinf*{y - \hy}
  \le
  \normHtwo*{H - \hH}
  \normLtwo*{u}.
\end{equation*}
Thus, a small $\Htwo$ error guarantees a small $\Linf$ output error
(in the time-domain).
In the following, the function $H$ need not be a transfer function of
a finite-dimensional \ac{lti} system as in~\eqref{eq:lti-fom},
but it can be any element of the Hardy $\Htwo^{\nout \times \nin}$ space, i.e.,
a function $\fundef{H}{\CC}{\CCoi}$ analytic over $\CC_+$ with a finite $\Htwo$
norm~\cite{BeaG12}.

Assuming that $\hA$ has $\nrom$ distinct eigenvalues,
we can write the transfer function $\hH$ in \emph{the pole-residue form}
\begin{equation}\label{eq:pole-res-unstruct}
  \hH(s)
  = \hC \myparen*{s \hI - \hA}^{-1} \hB
  = \sum_{i = 1}^{\nrom} \frac{c_i b_i\herm}{s - \lambda_i}
\end{equation}
with pairwise distinct poles $\lambda_i$
(here $(\cdot)\herm$ denotes the complex conjugate).
Then the well-known necessary $\Htwo$-optimality conditions
in interpolation form~\cite{MeiL67,GugAB08}
state that an $\Htwo$-optimal \ac{rom}
with the transfer function in the pole-residue form~\eqref{eq:pole-res-unstruct}
needs to satisfy
\begin{subequations}\label{eq:lti-oc}
  \begin{align}
    \label{eq:lti-oc1}
    H\myparen*{-\overline{\lambda_i}}
    b_i
     & =
    \hH\myparen*{-\overline{\lambda_i}}
    b_i,                                 \\*
    \label{eq:lti-oc2}
    c_i\herm
    H\myparen*{-\overline{\lambda_i}}
     & =
    c_i\herm
    \hH\myparen*{-\overline{\lambda_i}}, \\*
    \label{eq:lti-oc3}
    c_i\herm
    H'\myparen*{-\overline{\lambda_i}}
    b_i
     & =
    c_i\herm
    \hH'\myparen*{-\overline{\lambda_i}}
    b_i,
  \end{align}
\end{subequations}
for $i = 1, 2, \ldots, \nrom$.
The necessary conditions~\eqref{eq:lti-oc1} and~\eqref{eq:lti-oc2} are,
respectively, right and left tangential Lagrange interpolation conditions,
and~\eqref{eq:lti-oc3} are bitangential Hermite conditions.
These conditions simply state that
the $\Htwo$-optimal rational approximant $\hH$
needs to be a bitangential Hermite interpolant to $H$
at the mirror of the poles of $\hH$
along the tangential directions resulting from the residues of $\hH$.
In other words,
the poles and residues of $\hH$ specify the optimal interpolation points and
the tangent directions.
The optimality conditions above assume simple poles,
our main focus in this paper.
For extension to poles with multiplicities, see~\cite{VanGA10}.

Interpolation conditions~\eqref{eq:lti-oc} can be enforced using
either a Petrov-Galerkin projection (intrusively) or
via data-driven Loewner matrices (non-intrusively).
This observation has led to \ac{irka}~\cite{GugAB08}
(projection-based formulation) and
transfer-function \ac{irka} (TF-IRKA)~\cite{BeaG12},
an efficient framework for locally $\Htwo$-optimal reduced-order modeling.
For more details on interpolatory model reduction,
we refer the reader to~\cite{AntBG20}.

Structure-preserving reduced-order modeling methods inspired by the unstructured
interpolatory conditions~\eqref{eq:lti-oc} were proposed, such as
SO-IRKA for second-order systems~\cite{Wya12},
IRKA-PH for port-Hamiltonian systems~\cite{GugPB+12}, and
SPTF-IRKA more generally~\cite{SinGB16}.
They yield high-fidelity \acp{rom}, but the question remains whether they can
achieve $\Htwo$-optimality or if there are similar methods that can achieve it.

The conditions~\eqref{eq:lti-oc} follow due to the (unstructured) pole-residue
form~\eqref{eq:pole-res-unstruct} of the transfer function $\hH$.
In this paper, we want to understand what happens to the interpolatory
conditions~\eqref{eq:lti-oc}
when the \ac{rom} is a structured system, e.g.,
a second-order system or a time-delay system, and
the pole-residue form needs to be modified accordingly.
As the optimization set changes,
it is expected that the necessary optimality conditions also change.
The question then is,
whether they are still in the form of bitangential Hermite conditions.

Recent work~\cite{MliG23a,MliG23b} introduced a unifying $\Ltwo$-optimal
reduced-order modeling framework,
covering both \ac{lti} systems and stationary parametric problems.
In particular,~\cite{MliG23b} showed that necessary optimality conditions
in the form of bitangential Hermite interpolation appear often,
demonstrated by recovering known conditions for (parametric) \ac{lti} systems
and deriving new ones for discretized $\Htwo$-optimal reduced-order modeling and
for a class of stationary parametric problems.
Here, we are interested in certain important classes of structured \ac{lti}
systems:
second-order systems,
\ac{ph} systems, and
time-delay systems.

Our main contributions are the following:
\begin{enumerate}
  \item Extending the work of~\cite{MliG23a,MliG23b} to complex-valued models
        (\Cref{sec:l2}).
  \item Developing interpolatory necessary $\Htwo$-optimality conditions for a
        class of diagonal structured systems
        (\Cref{sec:h2-diag}).
  \item Deriving interpolatory optimality conditions for specific diagonal
        structured \ac{lti} systems:
        (a) second-order systems (\Cref{{sec:so}}),
        (b) \ac{ph} systems (\Cref{sec:ph}), and
        (c) time-delay systems (\Cref{sec:td}).
  \item In almost all cases,
        we show that bitangential Hermite interpolation is the key uniform
        framework behind optimality.
\end{enumerate}

The rest of the paper is organized as follows.
In \Cref{sec:l2},
we recall some of the main results from~\cite{MliG23a,MliG23b}
on $\Ltwo$-optimal reduced-order modeling and
extend them to \acp{rom} with complex and/or diagonal matrices.
In \Cref{sec:h2-diag},
we specialize this discussion to $\Htwo$-optimal reduced-order modeling of
diagonal dynamical systems, and
then use this result in \Cref{sec:so,sec:ph,sec:td} to respectively cover
second-order, \ac{ph}, and time-delay systems.
We conclude with \Cref{sec:conclusion}.

\section{\texorpdfstring{$\Ltwo$}{L2}-optimal Reduced-order Modeling}%
\label{sec:l2}
Here we recall the setting of~\cite{MliG23a} and
extend it to \acp{strom} with complex matrices.
Then we generalize
the gradients of the squared $\Ltwo$ error from~\cite{MliG23a} and
necessary $\Ltwo$-optimality conditions from~\cite{MliG23b}
to complex \acp{strom}.
The final section is on the extension to \acp{diagstrom}.

\subsection{Setting}
Note that the transfer function of the \ac{rom}~\eqref{eq:lti-rom}
can be reformulated as the output of a parametric stationary problem
\begin{align*}
  \myparen*{s \hI - \hA} \hX(s) & = \hB,        \\*
  \hH(s)                        & = \hC \hX(s),
\end{align*}
where $s \in \imag \RR$ is interpreted as a parameter.
Generalizing the form of the above \ac{rom} and
$s$ to a parameter $\pp \in \pset \subseteq \CCpar$,
for a positive integer $\npar$,
brings us to the $\Ltwo$-optimal reduced-order modeling problem discussed
in~\cite{MliG23a}.
There, given a parameter-to-output mapping
\begin{align}\label{eq:ymapping}
  \fundef{\yf}{\pset}{\CCoi},
\end{align}
the goal is to construct \iac{strom}
\begin{subequations}\label{eq:rom}
  \begin{align}
    \Ar(\pp) \xr(\pp) & = \Br(\pp),          \\*
    \yr(\pp)          & = \Cr(\pp) \xr(\pp),
  \end{align}
\end{subequations}
with a parameter-separable form
\begin{equation}\label{eq:rom-param-sep-form}
  \Ar(\pp) = \sum_{i = 1}^{\qAr} \car_i(\pp) \cAr_i, \quad
  \Br(\pp) = \sum_{j = 1}^{\qBr} \cbr_j(\pp) \cBr_j, \quad
  \Cr(\pp) = \sum_{k = 1}^{\qCr} \ccr_k(\pp) \cCr_k,
\end{equation}
where
$\xr(\pp) \in \CCri$ is the reduced state,
$\yr(\pp) \in \CCoi$ is the approximate output,
$\Ar(\pp) \in \CCrr$,
$\Br(\pp) \in \CCri$,
$\Cr(\pp) \in \CCor$,
$\fundef{\car_i, \cbr_j, \ccr_k}{\pset}{\CC}$,
$\cAr_i \in \RRrr$,
$\cBr_j \in \RRri$, and
$\cCr_k \in \RRor$.
From here on, we also allow complex reduced matrices, i.e.,
$\cAr_i \in \CCrr$,
$\cBr_j \in \CCri$, and
$\cCr_k \in \CCor$.
We will use the notation $(\cAr_i, \cBr_j, \cCr_k)$ to denote the \ac{strom}
specified by~\eqref{eq:rom} and~\eqref{eq:rom-param-sep-form}.

In~\cite{MliG23a} the \ac{strom}~\eqref{eq:rom} is constructed to minimize
the squared $\Ltwo$ error
\begin{equation}\label{eq:l2-cost}
  \obj\myparen*{\cAr_i, \cBr_j, \cCr_k}
  = \normLtwomu*{\yf - \yr}^2
  = \int_{\pset} \normF*{\yf(\pp) - \yr(\pp)}^2 \difm{\pp},
\end{equation}
where $\measure$ is a measure over $\pset$.

The work~\cite{MliG23a} derived the gradients of $\obj$
with respect to the \emph{real} \ac{strom} matrices $\cAr_i, \cBr_j, \cCr_k$.
To develop the interpolatory necessary optimality conditions of structured
\ac{lti} systems,
we will require diagonalizability of certain matrix pencils
as we will explore in more detail in later sections.
However, since diagonalization of real systems
can lead to complex diagonal matrices,
we need to generalize the results in~\cite{MliG23a,MliG23b}
to complex reduced-order matrices.
The next section achieves this goal using Wirtinger calculus.

\subsection{Wirtinger Calculus and Gradients of the Squared
  \texorpdfstring{$\Ltwo$}{L2} Error}%
\label{sec:l2-grad}
Since the (squared) $\Ltwo$ error is real-valued,
it cannot be analytic with respect to complex variables
(unless it is constant, which is not an interesting case).
Wirtinger calculus generalizes the complex derivative to non-analytic functions;
see~\cite[Section~8.2.1]{Vui14} and references within.
In particular, let the function $f$ of a complex variable $z$ be written as
$f(z) = g(z, \overline{z}) = h(x, y)$
for a function $g$ that is analytic with respect to both variables and
for a function $h$ of the real and imaginary part of $z = x + \imag y$.
Define the following differential operators
\begin{equation*}
  \frac{\partial}{\partial z}
  =
  \frac{1}{2}
  \myparen*{
    \frac{\partial}{\partial x}
    - \imag \frac{\partial}{\partial y}
  }, \quad
  \frac{\partial}{\partial \overline{z}}
  =
  \frac{1}{2}
  \myparen*{
    \frac{\partial}{\partial x}
    + \imag \frac{\partial}{\partial y}
  }.
\end{equation*}
These differential operators allow to differentiate a non-analytic function $f$
by treating $z$ and $\overline{z}$ as independent variables.

This can be analogously generalized to gradients with respect to
complex vectors and matrices.
In particular,
for a real-valued function $f$ of a complex matrix $X$ such that
$f(X) = g(X, \overline{X})$,
if we have that
\begin{equation*}
  f(X + \Delta X)
  = f(X)
  + \ipF*{\nabla_X g(X, \overline{X})}{\overline{\Delta X}}
  + \ipF*{\nabla_{\overline{X}} g(X, \overline{X})}{\Delta X}
  + o(\norm{\Delta X}),
\end{equation*}
then we define $\nabla_X f(X) = \nabla_X g(X, \overline{X})$ and
$\nabla_{\overline{X}} f(X) = \nabla_{\overline{X}} g(X, \overline{X})$.
This allows us to directly generalize the results of~\cite{MliG23a,MliG23b} to
the complex-valued case.
\begin{theorem}\label{thm:grad}
  Suppose that
  $\pset \subseteq \CCpar$,
  $\measure$ is a measure over $\pset$,
  the function $\yf$ is in $\Ltwo(\pset, \measure; \CCoi)$,
  functions $\fundef{\car_i, \cbr_j, \ccr_k}{\pset}{\CC}$ are measurable and
  satisfy
  \begin{equation}\label{eq:abc-l2-bounded}
    \int_{\pset}
    \myparen*{
      \frac{
        \sum_{j = 1}^{\qBr} \abs*{\cbr_j(\pp)}
        \sum_{k = 1}^{\qCr} \abs*{\ccr_k(\pp)}
      }{
        \sum_{i = 1}^{\qAr} \abs*{\car_i(\pp)}
      }
    }^{2}
    \difm{\pp}
    < \infty.
  \end{equation}
  Let $\Sigma_{\textnormal{St}}$ denote the set of \acp{strom}
  $(\cAr_i, \cBr_j, \cCr_k)$ in~\eqref{eq:rom} and~\eqref{eq:rom-param-sep-form}
  such that
  $\cAr_i \in \CCrr$,
  $\cBr_j \in \CCri$,
  $\cCr_k \in \CCor$, and
  \begin{equation}\label{eq:ainv-bounded}
    \esssup_{\pp \in \pset} \, \normF*{\car_i(\pp) {\Ar(\pp)}^{-1}}
    < \infty, \quad
    i = 1, 2, \ldots, \qAr.
  \end{equation}
  Then, for $\obj$ in~\eqref{eq:l2-cost} and any
  $(\cAr_i, \cBr_j, \cCr_k) \in \Sigma_{\textnormal{St}}$,
  \begin{subequations}\label{eq:grad}
    \begin{align}
      \label{eq:grad-A}
      \nabla_{\overline{\cAr_i}} \obj = {}
       &
      \int_{\pset} \overline{\car_i(\pp)}
      \xrd(\pp)
      \myparen*{\yf(\pp) - \yr(\pp)}
      \xr(\pp)\herm
      \difm{\pp},
       & i = 1, 2, \ldots, \qAr, \\
      \label{eq:grad-B}
      \nabla_{\overline{\cBr_j}} \obj = {}
       &
      \int_{\pset} \overline{\cbr_j(\pp)}
      \xrd(\pp)
      \myparen*{\yr(\pp) - \yf(\pp)}
      \difm{\pp},
       & j = 1, 2, \ldots, \qBr, \\
      \label{eq:grad-C}
      \nabla_{\overline{\cCr_k}} \obj = {}
       &
      \int_{\pset} \overline{\ccr_k(\pp)}
      \myparen*{\yr(\pp) - \yf(\pp)}
      \xr(\pp)\herm
      \difm{\pp},
       & k = 1, 2, \ldots, \qCr,
    \end{align}
  \end{subequations}
  where $\xrd(\pp) = \Ar(\pp)\mherm \Cr(\pp)\herm \in \CCro$ is the dual reduced
  state.
\end{theorem}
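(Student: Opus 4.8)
The plan is to reproduce the real-variable computation of~\cite{MliG23a} while carefully tracking complex conjugates through Wirtinger calculus. I would start by recording two structural facts. Writing $\normF{M}^2 = \trace{M M\herm}$, the integrand of $\obj$ is the product of $\yf(\pp) - \yr(\pp)$, which (for fixed $\pp$) depends holomorphically on the entries of $\cAr_i, \cBr_j, \cCr_k$ --- because $\yr(\pp) = \Cr(\pp)\Ar(\pp)^{-1}\Br(\pp)$ is a rational, hence holomorphic, function of those entries wherever $\Ar(\pp)$ is invertible, which by~\eqref{eq:ainv-bounded} holds for $\measure$-a.e.\ $\pp$ --- and of its conjugate $\overline{\yf(\pp) - \yr(\pp)}$, which depends holomorphically on $\overline{\cAr_i}, \overline{\cBr_j}, \overline{\cCr_k}$. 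Hence each of the Wirtinger operators $\nabla_{\overline{\cAr_i}}, \nabla_{\overline{\cBr_j}}, \nabla_{\overline{\cCr_k}}$ annihilates $\yr$, so that only the anti-holomorphic factor contributes to a given barred gradient and the computation reduces to differentiating $\overline{\yr}$.

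Next I would expand $\yr$, and hence $\overline{\yr}$, to first order under a perturbation of each reduced matrix, using the parameter-separable form~\eqref{eq:rom-param-sep-form}. A perturbation $\cAr_i \mapsto \cAr_i + \Delta$ sends $\Ar(\pp) \mapsto \Ar(\pp) + \car_i(\pp)\Delta$, and the resolvent (Neumann-series) expansion gives $\Ar(\pp)^{-1} \mapsto \Ar(\pp)^{-1} - \car_i(\pp)\,\Ar(\pp)^{-1}\Delta\,\Ar(\pp)^{-1} + o(\normF{\Delta})$, while the dependence of $\yr$ on $\cBr_j$ and $\cCr_k$ is affine and even simpler. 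Substituting these expansions into the pointwise integrand $\normF{\yf(\pp) - \yr(\pp)}^2$, conjugate-transposing so that $\overline{M}\tran = M\herm$, and using cyclicity of the trace, the first variation through the anti-holomorphic factor collapses (after recognizing $\xr(\pp) = \Ar(\pp)^{-1}\Br(\pp)$ and the dual state $\xrd(\pp) = \Ar(\pp)\mherm\Cr(\pp)\herm$) to an expression $\ipF*{G(\pp)}{\Delta}$ whose pointwise density $G(\pp)$ equals $\overline{\car_i(\pp)}\,\xrd(\pp)\myparen*{\yf(\pp) - \yr(\pp)}\xr(\pp)\herm$ for $\cAr_i$, equals $\overline{\cbr_j(\pp)}\,\xrd(\pp)\myparen*{\yr(\pp) - \yf(\pp)}$ for $\cBr_j$, and equals $\overline{\ccr_k(\pp)}\myparen*{\yr(\pp) - \yf(\pp)}\xr(\pp)\herm$ for $\cCr_k$; the sign flip on the $\cAr_i$ density is precisely the minus sign coming from differentiating $\Ar(\pp)^{-1}$. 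Matching $\ipF*{G(\pp)}{\Delta}$ against the defining relation of the Wirtinger gradient identifies $G(\pp)$ as the pointwise gradient density, and these are the integrands in~\eqref{eq:grad}.

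The remaining, and in my view the delicate, step is to integrate over $\pset$ and to justify interchanging $\nabla$ with $\int_{\pset} \dotvar \difm{\pp}$. Since $\pset$ may be unbounded, this requires an integrable function dominating the difference quotients uniformly in $\pp$. Here~\eqref{eq:ainv-bounded} bounds $\car_i(\pp)\Ar(\pp)^{-1}$ essentially uniformly, which together with~\eqref{eq:abc-l2-bounded} controls $\xr(\pp)$ and $\xrd(\pp)$ (and the remainder terms of the resolvent expansion) by the $\Ltwo$ weight appearing in~\eqref{eq:abc-l2-bounded} and makes $\yr$, hence $\yf - \yr$, an element of $\Ltwo(\pset, \measure)$; a Cauchy--Schwarz estimate then yields the required dominating function, and the dominated convergence theorem legitimizes the interchange and completes the proof. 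A useful cross-check, and an alternative proof, is to split each complex matrix into its real and imaginary parts, apply the real-valued gradient formulas of~\cite{MliG23a}, and reassemble them via $\nabla_{\overline{X}} = \tfrac12\myparen*{\nabla_{\operatorname{Re} X} + \imag\,\nabla_{\operatorname{Im} X}}$; the direct Wirtinger route above is shorter and keeps the structure of~\eqref{eq:grad} transparent.
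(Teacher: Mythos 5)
Your proposal is correct and follows essentially the same route as the paper: the paper's proof likewise reduces to the first-variation computation of \cite{MliG23a} (Theorem~3.7), with the key observation---identical to yours---that only the anti-holomorphic dependence of the error on $\overline{\cAr_i}, \overline{\cBr_j}, \overline{\cCr_k}$ contributes to the barred Wirtinger gradients, which removes the factor of $2$ of the real case and leaves the conjugation on the scalar coefficients. Your explicit resolvent expansion and the dominated-convergence justification of interchanging $\nabla$ with $\int_{\pset}\dotvar\difm{\pp}$ simply spell out details the paper delegates to the cited reference.
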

\begin{proof}
  The proof is similar to the proof of Theorem~3.7 in~\cite{MliG23a}.
  The significant change is that $\ipLtwo{\yf}{\yr}$ is no longer necessarily
  real and not equal to $\ipLtwo{\yr}{\yf}$.
  Therefore, we have
  \begin{equation*}
    \obj
    =
    \normLtwo*{\yf - \yr}^2
    =
    \normLtwo{\yf}^2
    - \ipLtwo*{\yf}{\yr}
    - \ipLtwo*{\yr}{\yf}
    + \normLtwo*{\yr}^2.
  \end{equation*}
  However, since $\ipLtwo{\yf}{\yr}$ does not depend directly on
  $\overline{\cAr_i}, \overline{\cBr_j}, \overline{\cCr_k}$,
  only $\ipLtwo{\yr}{\yf}$ contributes to the gradients.
  Similarly, only one part in the product rule applied to $\ipLtwo{\yr}{\yr}$
  contributes to the gradients.
  This leads to the missing factor of $2$ compared to~\cite{MliG23a}.
  Additionally, because the scalar functions $\car_i, \cbr_j, \ccr_k$
  are not assumed to be closed under conjugation,
  the conjugation remains outside of the function evaluation.
\end{proof}

These gradients can then be used to develop
an $\Ltwo$-optimal reduced-order modeling algorithm,
as done in~\cite{MliG23a}
for the case of real stationary parametric problems and \ac{lti} systems.
In the following, we focus on necessary optimality conditions,
especially in the interpolation form.
In the next subsection, we start with the necessary conditions for \acp{strom}.
This will then be the basis for the structured problems we consider in later
sections.

\subsection{Necessary \texorpdfstring{$\Ltwo$}{L2}-optimality Conditions}
As done in~\cite{MliG23b} for the real-valued case,
an important consequence of \Cref{thm:grad} is that
by setting the gradients to zero,
it yields the necessary optimality conditions
for $\Ltwo$-optimal reduced-order modeling using parameter-separable forms.
The same holds for differential operators from Wirtinger calculus, i.e.,
local minima have zero gradient, leading to our next result.
\begin{corollary}\label{cor:cond}
  Let the assumptions in \Cref{thm:grad} hold.
  Furthermore, let $(\cAr_i, \cBr_j, \cCr_k)$ be an $\Ltwo$-optimal \ac{strom}
  of $H$ among $\Sigma_{\textnormal{St}}$ with $\hH$ as in~\eqref{eq:rom}
  and~\eqref{eq:rom-param-sep-form}.
  Then,
  \begin{subequations}\label{eq:cond}
    \begin{align}
      \label{eq:cond-C}
      \int_{\pset}
      \overline{\ccr_k(\pp)}
      \yf(\pp) \xr(\pp)\herm
      \difm{\pp}
       & =
      \int_{\pset}
      \overline{\ccr_k(\pp)}
      \yr(\pp) \xr(\pp)\herm
      \difm{\pp},
       & k = 1, 2, \ldots, \qCr, \\
      \label{eq:cond-B}
      \int_{\pset}
      \overline{\cbr_j(\pp)}
      \xr_d(\pp) \yf(\pp)
      \difm{\pp}
       & =
      \int_{\pset}
      \overline{\cbr_j(\pp)}
      \xr_d(\pp) \yr(\pp)
      \difm{\pp},
       & j = 1, 2, \ldots, \qBr, \\
      \label{eq:cond-A}
      \int_{\pset}
      \overline{\car_i(\pp)}
      \xr_d(\pp) \yf(\pp) \xr(\pp)\herm
      \difm{\pp}
       & =
      \int_{\pset}
      \overline{\car_i(\pp)}
      \xr_d(\pp) \yr(\pp) \xr(\pp)\herm
      \difm{\pp},
       & i = 1, 2, \ldots, \qAr.
    \end{align}
  \end{subequations}
\end{corollary}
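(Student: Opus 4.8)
The plan is to derive \Cref{cor:cond} directly from \Cref{thm:grad} by imposing the first-order necessary condition for a local minimum. The first step is to observe that, under the stated hypotheses (which are precisely the assumptions of \Cref{thm:grad}), the objective $\obj$ is a real-valued, differentiable function of the complex reduced-order matrices $(\cAr_i, \cBr_j, \cCr_k)$ in the Wirtinger sense, so that at any local minimizer every Wirtinger gradient vanishes. As recalled just before the statement, for a real-valued function one has $\nabla_X \obj = \overline{\nabla_{\overline{X}} \obj}$, so it suffices to set the conjugate gradients to zero; thus $\nabla_{\overline{\cAr_i}} \obj = 0$ for $i = 1, \ldots, \qAr$, $\nabla_{\overline{\cBr_j}} \obj = 0$ for $j = 1, \ldots, \qBr$, and $\nabla_{\overline{\cCr_k}} \obj = 0$ for $k = 1, \ldots, \qCr$.

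The second step is a direct substitution into the gradient formulas \eqref{eq:grad}. Setting the right-hand side of \eqref{eq:grad-C} to zero and moving the $\yf$-term to the other side of the equation gives exactly \eqref{eq:cond-C}; likewise, \eqref{eq:grad-B} yields \eqref{eq:cond-B}, and \eqref{eq:grad-A} yields \eqref{eq:cond-A}, in each case after rewriting the difference $\yf - \yr$ (respectively $\yr - \yf$) as the asserted two-sided equality and using linearity of the integral. Since $\xrd(\pp) = \Ar(\pp)\mherm \Cr(\pp)\herm$ is the same dual reduced state appearing in \Cref{thm:grad}, no additional notation or manipulation is required.

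I do not expect any genuine obstacle: all the analytic content—integrability, differentiability, and the explicit form of the gradients—is already contained in \Cref{thm:grad}, and the remaining argument is the textbook fact that a local minimum is a stationary point. The only point worth making explicitly is the Wirtinger-calculus remark that stationarity of a real-valued objective is faithfully captured by the vanishing of the conjugate gradients, so that \eqref{eq:cond} is an equivalent reformulation of ``$\hH$ is an $\Ltwo$-optimal \ac{strom}'' at the first-order level. This mirrors the real-valued development in~\cite{MliG23b} and sets up the structured cases treated in the later sections.
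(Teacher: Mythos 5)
Your proposal is correct and matches the paper's own (implicit) argument: the paper derives \Cref{cor:cond} exactly by invoking the fact that local minima of a real-valued Wirtinger-differentiable function have vanishing (conjugate) gradients and then setting the expressions \eqref{eq:grad-A}--\eqref{eq:grad-C} of \Cref{thm:grad} to zero and rearranging. No gaps; the only caveat is that the conditions are necessary (stationarity), not a characterization of optimality, which your phrasing ``at the first-order level'' already acknowledges.
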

In the next subsection,
we specialize these conditions to \acp{diagstrom}.

\subsection{\texorpdfstring{$\Ltwo$}{L2}-optimality for Diagonal Structured
  Dynamics}%
\label{sec:l2-diag}
We are interested in deriving interpolatory optimality conditions and
this will require a certain diagonalizability assumption
(as also used in the unstructured $\Htwo$-optimality
conditions~\eqref{eq:lti-oc}).
In particular, we require that there exist matrices $\hS, \hT \in \CCrr$
such that $\hS\herm \cAr_i \hT$ is diagonal for all $i = 1, 2, \ldots, \qAr$.

This is generically true when $\qAr \le 2$,
as in the case of non-parametric \ac{lti} systems~\eqref{eq:lti-rom}.
Such systems were considered in~\cite{MliG23b},
as well as stationary parametric systems with $\qAr = 2$ and
parametric \ac{lti} systems with a special tensor structure.
Here, we consider systems with $\qAr \ge 3$,
such as second-order and time-delay systems,
where diagonalizability is no longer a generic property.

Therefore, we consider \acp{diagstrom}, i.e.,
\acp{strom} $(\cAr_i, \cBr_j, \cCr_k)$ defined by~\eqref{eq:rom}
and~\eqref{eq:rom-param-sep-form}
but with diagonal $\cAr_i$ (thus we are assuming that the transformation
$\hS\herm \cAr_i \hT$ has been already applied).
As mentioned, a generic example of such systems are those with $\qAr = 2$ after
a diagonalizing transformation.
An additional example are second-order systems with Rayleigh damping where
$\Ar(s) = s^2 \hM + s (\alpha \hM + \beta \hK) + \hK$ and
$\hM$ and $\hK$ are the mass and stiffness matrices, respectively.

Deriving the optimality conditions for \acp{diagstrom} will require computing
gradients with respect to diagonal matrices.
The following lemma will allow us to reuse the gradients
with respect to full matrices
to find gradients with respect to diagonal matrices.
\begin{lemma}\label{lem:diag}
  Let $V$ be $\CC^{m}$ or $\CC^{m \times n}$, and
  $\fundef{f}{V}{\RR}$ be a Wirtinger differentiable function.
  Furthermore, let $W \subseteq V$ be a subspace closed under conjugation and
  define $g = f\vert_W$ as the restriction of $f$ onto $W$.
  Then for $w \in W$,
  the function $g$ is Wirtinger differentiable at $w$ and
  $\nabla_{\overline{w}} g(w) = \Proj_W \nabla_{\overline{w}} f(w)$
  where $\fundef{\Proj_W}{V}{W}$ is the orthogonal projector onto $W$.

  In particular,
  let $\fundef{F}{\CCrr}{\RR}$ be a Wirtinger differentiable function and
  $G = F\vert_D$ its restriction to diagonal matrices
  $D = \{X \in \CCrr : X_{ij} = 0,\ \forall i \neq j\}$.
  Then for $X \in D$, the function $G$ is Wirtinger differentiable at $X$ and
  $\nabla_{\overline{X}} G(X) = \mydiag{\nabla_{\overline{X}} F(X)}$,
  where $\mydiag{Y} = \sum_{i = 1}^{\nrom} Y_{ii} e_i e_i\tran$
  is the diagonal part of the matrix $Y$.
\end{lemma}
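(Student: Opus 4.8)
The plan is to argue directly from the definition of Wirtinger differentiability recalled above: restrict the admissible perturbations to the subspace $W$, expand $f$ along such perturbations, and then recognize the resulting first-order term as the one associated with the orthogonally projected gradients. Throughout, $W$ carries the inner product inherited from $V$, so $\Proj_W$ is the orthogonal projector with respect to the (Frobenius) inner product $\ipF{\cdot}{\cdot}$ used in that definition.

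First I would fix $w \in W$ and consider only perturbations $\Delta w \in W$. Applying the Wirtinger expansion of $f$ at $w$ gives
\begin{equation*}
  g(w + \Delta w)
  = f(w + \Delta w)
  = f(w)
  + \ipF*{\nabla_w f(w)}{\overline{\Delta w}}
  + \ipF*{\nabla_{\overline{w}} f(w)}{\Delta w}
  + o(\norm{\Delta w}),
\end{equation*}
and the little-$o$ term is inherited, since it already holds as $\Delta w \to 0$ in the larger space $V$. Because $W$ is closed under conjugation, both $\Delta w$ and $\overline{\Delta w}$ lie in $W$, so the defining property of an orthogonal projector, $\ipF{a}{b} = \ipF{\Proj_W a}{b}$ for all $a \in V$ and $b \in W$, lets me replace $\nabla_w f(w)$ by $\Proj_W \nabla_w f(w)$ in the first inner product and $\nabla_{\overline{w}} f(w)$ by $\Proj_W \nabla_{\overline{w}} f(w)$ in the second, without changing the expansion.

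Next I would verify that $\myparen*{\Proj_W \nabla_w f(w),\ \Proj_W \nabla_{\overline{w}} f(w)}$ is a bona fide Wirtinger gradient pair for $g$ on $W$: both entries lie in $W$ by construction, and since $f$ is real-valued we have $\nabla_w f(w) = \overline{\nabla_{\overline{w}} f(w)}$, while $\Proj_W$ commutes with complex conjugation (again because $W$ is conjugation-closed, so $W^\perp$ is too), whence $\Proj_W \nabla_w f(w) = \overline{\Proj_W \nabla_{\overline{w}} f(w)}$, the consistency a real-valued function requires. Comparing the displayed expansion with the definition of the Wirtinger derivative of $g$ then yields $\nabla_{\overline{w}} g(w) = \Proj_W \nabla_{\overline{w}} f(w)$, proving the general statement. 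For the diagonal case I would take $V = \CCrr$ and $W = D$: the diagonal matrices form a subspace closed under conjugation whose orthogonal complement in the Frobenius inner product is the set of matrices with vanishing diagonal, so $\Proj_D Y = \sum_{i=1}^{\nrom} Y_{ii} e_i e_i\tran = \mydiag{Y}$, and the general formula specializes to $\nabla_{\overline{X}} G(X) = \mydiag{\nabla_{\overline{X}} F(X)}$.

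The computations here are all routine; the only point that genuinely needs the conjugation-closedness hypothesis — and hence the place to be careful — is that it is used twice: once to ensure $\overline{\Delta w} \in W$ so that the projector identity applies to the $\nabla_w f$ term, and once to ensure $\Proj_W$ commutes with conjugation so that the projected gradients still form an admissible conjugate pair.
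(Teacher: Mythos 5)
Your proposal is correct and follows essentially the same route as the paper: restrict the Wirtinger expansion of $f$ to perturbations $\Delta w \in W$, use conjugation-closedness of $W$ so that $\overline{\Delta w} \in W$ and the orthogonal-projector identity lets you replace both gradients by their projections, then specialize to $W = D$ with $\Proj_D = \mydiag{\cdot}$. Your added check that the projected gradients still form a consistent conjugate pair for the real-valued $g$ is a small refinement the paper leaves implicit, but the argument is the same.
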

\begin{proof}
  Since $f$ is Wirtinger differentiable at $w$,
  we have that,
  for $\Delta w \in W$,
  \begin{equation*}
    f(w + \Delta w)
    =
    f(w)
    + \ip*{\nabla_{w} f(w)}{\overline{\Delta w}}
    + \ip*{\nabla_{\overline{w}} f(w)}{\Delta w}
    + o(\norm{\Delta w}).
  \end{equation*}
  Therefore, since $f$ and $g$ are equal over $W$, we have
  \begin{equation}
    \label{eq:g-diff}
    g(w + \Delta w)
    =
    g(w)
    + \ip*{\nabla_{w} f(w)}{\overline{\Delta w}}
    + \ip*{\nabla_{\overline{w}} f(w)}{\Delta w}
    + o(\norm{\Delta w}).
  \end{equation}
  We observe that
  \begin{align*}
    \ip*{\nabla_{w} f(w)}{\overline{\Delta w}}
     & =
    \ip*{\Proj_W \nabla_{w} f(w)}{\overline{\Delta w}}
    + \ip*{\nabla_{w} f(w) - \Proj_W \nabla_{w} f(w)}{\overline{\Delta w}} \\
     & =
    \ip*{\Proj_W \nabla_{w} f(w)}{\overline{\Delta w}}
  \end{align*}
  and
  \begin{align*}
    \ip*{\nabla_{\overline{w}} f(w)}{\Delta w}
     & =
    \ip*{\Proj_W \nabla_{\overline{w}} f(w)}{\Delta w}
    + \ip*{\nabla_{\overline{w}} f(w)
    - \Proj_W \nabla_{\overline{w}} f(w)}{\Delta w} \\
     & =
    \ip*{\Proj_W \nabla_{\overline{w}} f(w)}{\Delta w}.
  \end{align*}
  Thus,~\eqref{eq:g-diff} becomes
  \begin{equation*}
    g(w + \Delta w)
    =
    g(w)
    + \ip*{\Proj_W \nabla_{w} f(w)}{\overline{\Delta w}}
    + \ip*{\Proj_W \nabla_{\overline{w}} f(w)}{\Delta w}
    + o(\norm{\Delta w}).
  \end{equation*}
  Since $\Proj_W \nabla_{w} f(w)$ and $\Proj_W \nabla_{\overline{w}} f(w)$ are
  elements of $W$, it follows that
  $g$ is Wirtinger differentiable at $w$, and its gradients are
  $\nabla_{w} g(w) = \Proj_W \nabla_{w} f(w)$ and
  $\nabla_{\overline{w}} g(w) = \Proj_W \nabla_{\overline{w}} f(w)$,
  which proves the first part.

  Next, we notice that the set of diagonal matrices $D$
  is a subspace of the space of matrices $\CCrr$,
  it is closed under conjugation, and
  $\Proj_D(Y) = \mydiag{Y}$.
  Thus, the second part of the result directly follows from the first.
\end{proof}

\Cref{lem:diag} will now help us derive the necessary $\Ltwo$-optimality
conditions for \acp{diagstrom}.
Since in this case all $\cAr_i$'s are diagonal
(and thus in return so is $\Ar(\pp)$ in~\eqref{eq:rom}),
$\yr$ for a \ac{diagstrom} has a ``pole-residue'' form, i.e.,
\begin{equation}\label{eq:diag-pole-res}
  \yr(\pp)
  = \Cr(\pp) {\Ar(\pp)}^{-1} \Br(\pp)
  = \sum_{\ell = 1}^{\nrom}
  \frac{c_{\ell}(\pp) b_{\ell}(\pp)\herm}{a_{\ell}(\pp)},
\end{equation}
where
$a_{\ell}(\pp)$ is the $\ell$th diagonal entry of $\Ar(\pp)$,
$b_{\ell}(\pp) \in \CCi$ is the $\ell$th column of $\Br(\pp)\herm$, and
$c_{\ell}(\pp) \in \CCo$ is the $\ell$th column of $\Cr(\pp)$.
With this pole-residue form in hand,
we now can investigate the optimality conditions for \acp{diagstrom}.
\begin{corollary}\label{cor:cond-diag}
  Let the assumptions in \Cref{thm:grad} hold and
  denote by $\Sigma_{\textnormal{D-St}}$ the set of \acp{diagstrom} in
  $\Sigma_{\textnormal{St}}$.
  Furthermore, let $(\cAr_i, \cBr_j, \cCr_k)$ be an $\Ltwo$-optimal
  \ac{diagstrom} of $H$ among $\Sigma_{\textnormal{D-St}}$ with $\hH$ as
  in~\eqref{eq:diag-pole-res}.
  Then
  \begin{subequations}\label{eq:cond-diag}
    \begin{align}
      \label{eq:cond-C-diag}
      \int_{\pset}
      \frac{
        \overline{\ccr_k(\pp)}
        \yf(\pp)
        b_{\ell}(\pp)
      }{
        \overline{a_{\ell}(\pp)}
      }
      \difm{\pp}
       & =
      \int_{\pset}
      \frac{
        \overline{\ccr_k(\pp)}
        \yr(\pp)
        b_{\ell}(\pp)
      }{
        \overline{a_{\ell}(\pp)}
      }
      \difm{\pp},
       & k = 1, 2, \ldots, \qCr, \\
      \label{eq:cond-B-diag}
      \int_{\pset}
      \frac{
        \overline{\cbr_j(\pp)}
        c_{\ell}(\pp)\herm
        \yf(\pp)
      }{
        \overline{a_{\ell}(\pp)}
      }
      \difm{\pp}
       & =
      \int_{\pset}
      \frac{
        \overline{\cbr_j(\pp)}
        c_{\ell}(\pp)\herm
        \yr(\pp)
      }{
        \overline{a_{\ell}(\pp)}
      }
      \difm{\pp},
       & j = 1, 2, \ldots, \qBr, \\
      \label{eq:cond-A-diag}
      \int_{\pset}
      \frac{
        \overline{\car_i(\pp)}
        c_{\ell}(\pp)\herm
        \yf(\pp)
        b_{\ell}(\pp)
      }{
        \overline{a_{\ell}(\pp)}^2
      }
      \difm{\pp}
       & =
      \int_{\pset}
      \frac{
        \overline{\car_i(\pp)}
        c_{\ell}(\pp)\herm
        \yr(\pp)
        b_{\ell}(\pp)
      }{
        \overline{a_{\ell}(\pp)}^2
      }
      \difm{\pp},
       & i = 1, 2, \ldots, \qAr,
    \end{align}
  \end{subequations}
  for $\ell = 1, 2, \ldots, \nrom$ where the coefficients
  $\car_i$, $\cbr_j$, and $\ccr_k$ are as defined
  in~\eqref{eq:rom-param-sep-form}.
\end{corollary}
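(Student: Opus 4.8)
The plan is to derive Corollary~\ref{cor:cond-diag} as a direct specialization of Corollary~\ref{cor:cond}, using Lemma~\ref{lem:diag} to pass from gradients with respect to full matrices to gradients with respect to diagonal matrices. Since the reduced matrices $\cAr_i$ are assumed diagonal, the objective $\obj$ is now viewed as a function of the diagonal entries only, i.e., as the restriction $G = F\vert_D$ of the full-matrix objective $F$. By Lemma~\ref{lem:diag}, the gradient $\nabla_{\overline{\cAr_i}} G$ equals $\mydiag{\nabla_{\overline{\cAr_i}} F}$, so the optimality condition $\nabla_{\overline{\cAr_i}} G = 0$ is equivalent to requiring only the diagonal entries of the full gradient~\eqref{eq:grad-A} to vanish. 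The gradients with respect to $\cBr_j$ and $\cCr_k$ are unchanged (those matrices remain full), so conditions~\eqref{eq:cond-C} and~\eqref{eq:cond-B} carry over verbatim.

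The second step is to substitute the pole-residue form~\eqref{eq:diag-pole-res} into the three conditions of Corollary~\ref{cor:cond} and extract the per-index ($\ell$) statements. Because $\Ar(\pp)$ is diagonal with entries $a_\ell(\pp)$, we have $\xr(\pp) = \Ar(\pp)^{-1}\Br(\pp)$ with $\ell$th row $b_\ell(\pp)\herm / a_\ell(\pp)$, and the dual state $\xrd(\pp) = \Ar(\pp)\mherm \Cr(\pp)\herm$ has $\ell$th row $c_\ell(\pp)\herm / \overline{a_\ell(\pp)}$. Condition~\eqref{eq:cond-C} is an identity of $\nrom \times \nin$ matrices; reading off its $\ell$th column gives~\eqref{eq:cond-C-diag} after noting that the $\ell$th column of $\xr(\pp)\herm$ is $b_\ell(\pp)/\overline{a_\ell(\pp)}$. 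Condition~\eqref{eq:cond-B} is an identity of $\nrom \times \nin$ matrices; its $\ell$th row yields~\eqref{eq:cond-B-diag}. Condition~\eqref{eq:cond-A}, after Lemma~\ref{lem:diag} reduces it to the diagonal part of an $\nrom \times \nrom$ identity, has $(\ell,\ell)$ entry obtained by pairing the $\ell$th row of $\xrd(\pp)$ with the $\ell$th column of $\xr(\pp)\herm$, producing the factor $c_\ell(\pp)\herm \cdots b_\ell(\pp) / \overline{a_\ell(\pp)}^2$ in~\eqref{eq:cond-A-diag}. In each case the scalar weight $\overline{\ccr_k(\pp)}$, $\overline{\cbr_j(\pp)}$, or $\overline{\car_i(\pp)}$ is simply carried along, unaffected by the indexing in $\ell$.

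The one point requiring a little care — and the step I expect to be the main (minor) obstacle — is bookkeeping the interaction between the $i$-index (which runs over the $\qAr$ parameter-separable terms of $\Ar$) and the $\ell$-index (which runs over the $\nrom$ diagonal positions). In Corollary~\ref{cor:cond} there is one matrix identity~\eqref{eq:cond-A} per $i$; after restricting to diagonal $\cAr_i$ via Lemma~\ref{lem:diag} each such identity collapses to its $\nrom$ diagonal scalar entries, so the count becomes $\qAr \cdot \nrom$ scalar equations indexed by $(i,\ell)$, matching~\eqref{eq:cond-A-diag}. One must also check that the hypotheses of Theorem~\ref{thm:grad} — in particular the integrability conditions~\eqref{eq:abc-l2-bounded} and~\eqref{eq:ainv-bounded} — are exactly what is needed for the Wirtinger-differentiability hypothesis of Lemma~\ref{lem:diag} to apply to $F$, which is immediate since these are precisely the assumptions under which Theorem~\ref{thm:grad} establishes the gradient formula for $F$. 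With that verified, the corollary follows by assembling the componentwise identities.
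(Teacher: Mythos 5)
Your proposal is correct and follows essentially the same route as the paper: the tangential conditions \eqref{eq:cond-C-diag} and \eqref{eq:cond-B-diag} are read off componentwise from \eqref{eq:cond-C} and \eqref{eq:cond-B} using the explicit diagonal form of $\xr$ and $\xrd$, while the bitangential condition \eqref{eq:cond-A-diag} comes from applying \Cref{lem:diag} to the gradient \eqref{eq:grad-A} and setting only its diagonal entries to zero. Your observation that \eqref{eq:cond-A} cannot be used verbatim for the diagonally constrained $\cAr_i$ (only the diagonal part of the gradient vanishes) is exactly the point the paper makes.
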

\begin{proof}
  The right and left tangential conditions~\eqref{eq:cond-C-diag}
  and~\eqref{eq:cond-B-diag} follow, respectively,
  from conditions~\eqref{eq:cond-C} and~\eqref{eq:cond-B} of \Cref{cor:cond},
  using the facts that
  \begin{equation*}
    \xr(\pp)\herm
    =
    \Br(\pp)\herm
    \Ar(\pp)\mherm
    =
    \begin{bmatrix}
      \frac{b_1(\pp)}{a_1(\pp)\herm}
       & \cdots
       & \frac{b_{\nrom}(\pp)}{a_{\nrom}(\pp)\herm}
    \end{bmatrix}\!,\quad
    \xrd(\pp)
    =
    \Ar(\pp)\mherm
    \Cr(\pp)\herm
    =
    \begin{bmatrix}
      \frac{c_1(\pp)\herm}{a_1(\pp)\herm} \\
      \vdots                              \\
      \frac{c_{\nrom}(\pp)\herm}{a_{\nrom}(\pp)\herm}
    \end{bmatrix}\!,
  \end{equation*}
  since $\Ar(\pp)$ is diagonal.
  For the bitangential conditions~\eqref{eq:cond-A-diag},
  we cannot directly use the condition~\eqref{eq:cond-A},
  but we can use the gradient~\eqref{eq:grad-A}
  together with \Cref{lem:diag},
  to conclude that the gradient with respect to the diagonal
  is the diagonal part of the gradient.
  Setting the diagonal entries to zeros gives us~\eqref{eq:cond-A-diag}.
\end{proof}
\Cref{thm:grad} and \Cref{cor:cond-diag} will form the foundation of our
analysis and allow us to derive the interpolatory optimality conditions for
structured \ac{lti} systems by choosing the variables
(such as $a_\ell(\pp)$, $\car_i(\pp)$ etc.)
suitably based on the various \ac{lti} system structures we consider.
In particular,
these results will be used for the non-parametric structured $\Htwo$ cases in
\Cref{sec:h2-diag,sec:so,sec:ph,sec:td}.
\begin{remark}\label{rem:opt-orth}
  The necessary optimality conditions in \Cref{cor:cond-diag} can be interpreted
  as orthogonality conditions:
  \begin{align*}
    \ipLtwo*{
    \frac{
    \ccr_k(\pp)
    e_m^{\nout}
    b_{\ell}(\pp)\herm
    }{a_{\ell}(\pp)}
    }{\yf - \yr}
     & = 0,
     & k = 1, 2, \ldots, \qCr,\
    m = 1, 2, \ldots, \nout,    \\
    \ipLtwo*{
    \frac{
    \cbr_j(\pp)
    c_{\ell}(\pp)
    {(e_m^{\nin})}\herm
    }{a_{\ell}(\pp)}
    }{\yf - \yr}
     & = 0,
     & j = 1, 2, \ldots, \qBr,\
    m = 1, 2, \ldots, \nin,     \\
    \ipLtwo*{
      \frac{
        \car_i(\pp)
        c_{\ell}(\pp)
        b_{\ell}(\pp)\herm
      }{{a_{\ell}(\pp)}^2}
    }{\yf - \yr}
     & = 0,
     & i = 1, 2, \ldots, \qAr,
  \end{align*}
  for $\ell = 1, 2, \ldots, \nrom$,
  where $e_m^{n} \in \RR^n$ is the $m$th canonical basis vector.
  This directly generalizes the same property for unstructured \ac{lti} systems.
\end{remark}

\section{Interpolatory \texorpdfstring{$\Htwo$}{H2}-optimality Conditions for
  Diagonal Structured Dynamics}%
\label{sec:h2-diag}
We now turn our attention back to
non-parametric (yet structured)
\ac{lti} systems where, in the language of \Cref{sec:l2},
the parameter space is $\pset = \imag \RR$,
the parameter is $\pp = s = \imag \omega$, and
the measure $\measure$ is the Lebesgue measure over $\imag \RR$.
For simplicity, we only consider \ac{lti} systems with
$s$-independent $\Br$ and $\Cr$.
Additionally, as in \Cref{sec:l2-diag},
we are interested in \ac{lti} systems with diagonal structure.

Recall that
the well-established interpolatory optimality conditions~\eqref{eq:lti-oc}
for unstructured \ac{lti} systems~\eqref{eq:lti-rom}
assume diagonalizability of $\hA$,
which leads to the pole-residue form~\eqref{eq:pole-res-unstruct}
with the denominator of the simple form $s - \lambda_i$.
Here, we derive the interpolatory $\Htwo$-optimality conditions for structured
transfer functions with a much more general (structured) denominator,
which will be needed for particular types of structured \ac{lti} systems
in later sections.

The pole-residue decomposition of a rational function
plays the fundamental role in deriving the $\Htwo$-optimality conditions
for the unstructured problem.
For structured problems, the residue computations are more involved.
The next result will be useful in residue computation and
then in simplifying the integrals in \Cref{cor:cond-diag}
for the structured problems under consideration.
\begin{lemma}\label{lem:res}
  Let $g$ and $h$ be analytic
  in a neighborhood of $c \in \CC$
  such that $h(c) = 0$ and $h'(c) \neq 0$.
  Define $f_1$ and $f_2$ as
  $f_1(z) = \frac{g(z)}{h(z)}$ and
  $f_2(z) = \frac{g(z)}{{h(z)}^2}$
  in a neighborhood of $c$.
  Then
  \begin{equation}\label{eq:resf1f2}
    \Res(f_1, c)
    =
    \frac{g(c)}{h'(c)}
    \quad \text{and} \quad
    \Res(f_2, c)
    =
    \frac{g'(c)}{{h'(c)}^2}
    - \frac{g(c) h''(c)}{{h'(c)}^3}.
  \end{equation}
\end{lemma}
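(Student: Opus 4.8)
The plan is to compute both residues directly from their Laurent expansions at $c$, exploiting the hypotheses $h(c) = 0$ and $h'(c) \neq 0$, which guarantee that $c$ is a simple zero of $h$ and hence a simple pole of $f_1$ and a double pole of $f_2$. First I would handle $\Res(f_1, c)$: since $c$ is a simple pole, the residue is $\lim_{z \to c} (z - c) f_1(z) = \lim_{z \to c} \frac{(z - c) g(z)}{h(z)}$, and because $h(c) = 0$ this limit is $g(c) / h'(c)$ by L'H\^opital (or equivalently by writing $h(z) = h'(c)(z - c) + O((z-c)^2)$). This is the routine part.

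The substance is the second formula, the residue of $f_2 = g / h^2$ at the double pole $c$. The approach is to use the standard formula for a residue at a pole of order $2$, namely $\Res(f_2, c) = \lim_{z \to c} \frac{\dd}{\dd z}\bigl[(z - c)^2 f_2(z)\bigr]$. Writing $\phi(z) := (z - c)^2 f_2(z) = (z-c)^2 g(z) / h(z)^2 = g(z) \cdot \bigl((z - c)/h(z)\bigr)^2$, I would set $\psi(z) := (z - c)/h(z)$, which extends analytically to $c$ with $\psi(c) = 1/h'(c)$, so that $\phi = g \cdot \psi^2$ and $\Res(f_2, c) = \phi'(c) = g'(c) \psi(c)^2 + 2 g(c) \psi(c) \psi'(c)$. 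It then remains to evaluate $\psi'(c)$.

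To compute $\psi'(c)$, I would Taylor-expand $h$ about $c$: $h(z) = h'(c)(z - c) + \tfrac12 h''(c)(z - c)^2 + O((z - c)^3)$, so $h(z)/(z - c) = h'(c) + \tfrac12 h''(c)(z - c) + O((z - c)^2)$, and therefore $\psi(z) = \bigl(h'(c) + \tfrac12 h''(c)(z - c) + O((z-c)^2)\bigr)^{-1}$. Differentiating (or expanding the reciprocal geometrically) gives $\psi'(c) = -\tfrac12 h''(c) / h'(c)^2$. Substituting $\psi(c) = 1/h'(c)$ and this value of $\psi'(c)$ into $\phi'(c) = g'(c)\psi(c)^2 + 2 g(c)\psi(c)\psi'(c)$ yields $\Res(f_2, c) = g'(c)/h'(c)^2 - g(c) h''(c)/h'(c)^3$, which is exactly the claimed expression.

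The main obstacle, such as it is, is purely bookkeeping: correctly tracking the first two Taylor coefficients of $h$ through the reciprocal $\psi$ and making sure the factor of $\tfrac12$ from $h''$ combines with the factor of $2$ in $\phi' = (g\psi^2)'$ to leave the clean coefficient $-1$ in front of $g(c)h''(c)/h'(c)^3$. An alternative, equally clean route I could use instead is to write $h(z) = h'(c)(z - c)\bigl(1 + \tfrac{h''(c)}{2h'(c)}(z - c) + \cdots\bigr)$, so $h(z)^2 = h'(c)^2 (z - c)^2 \bigl(1 + \tfrac{h''(c)}{h'(c)}(z - c) + \cdots\bigr)$, expand $g(z) = g(c) + g'(c)(z - c) + \cdots$ in the numerator, multiply the series $\tfrac{1}{h'(c)^2}\bigl(1 - \tfrac{h''(c)}{h'(c)}(z - c) + \cdots\bigr)$ for $1/h(z)^2 \cdot (z-c)^2$, and read off the coefficient of $(z - c)^{-1}$ in $f_2$, i.e.\ the coefficient of $(z - c)^{1}$ in $(z-c)^2 f_2(z)$. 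Either way the computation is short and elementary; no deep idea is needed beyond the order-$2$ residue formula.
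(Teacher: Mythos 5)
Your proof is correct. For $\Res(f_1,c)$ you do exactly what the paper does (the limit $\lim_{z\to c}(z-c)f_1(z)$ evaluated via L'H\^opital or the first-order Taylor expansion of $h$). For $\Res(f_2,c)$ you start from the same order-$2$ residue formula $\lim_{z\to c}\frac{\dd}{\dif z}\bigl[(z-c)^2 f_2(z)\bigr]$ as the paper, but the evaluation is organized differently: the paper grinds the limit out by repeated algebraic manipulation and another application of L'H\^opital's rule (a half-page of fraction bookkeeping), whereas you factor $(z-c)^2 f_2(z) = g(z)\,\psi(z)^2$ with $\psi(z) = (z-c)/h(z)$, note that $\psi$ extends analytically with $\psi(c) = 1/h'(c)$ and $\psi'(c) = -h''(c)/(2h'(c)^2)$ read off from the Taylor expansion of $h$, and finish with the product rule. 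Your route is cleaner and less error-prone; the paper's is more elementary in that it never names the auxiliary function, but it pays for that with a considerably longer chain of limits. Both computations check out and give the stated formula, so there is no gap.
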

\begin{proof}
  The first equality follows directly from~\cite[Proposition~4.1.2]{MarH99}.
  For the second equality, using~\cite[Proposition~4.1.4]{MarH99},
  it follows that
  \begin{align*}
    \Res(f_2, c)
     & =
    2 \frac{g'(c)}{h_2''(c)}
    - \frac{2 g(c) h_2'''(c)}{3 {h_2''(c)}^2},
  \end{align*}
  where $h_2(z) = {h(z)}^2$.
  From
  $h_2'(z) = 2 h(z) h'(z)$,
  $h_2''(z) = 2 {h'(z)}^2 + 2 h(z) h''(z)$, and
  $h_2'''(z) = 6 h'(z) h''(z) + 2 h(z) h'''(z)$,
  we obtain
  \begin{align*}
    \Res(f_2, c)
     & =
    2 \frac{g'(c)}{2 {h'(c)}^2}
    - \frac{12 g(c) h'(c) h''(c)}{3 {[2 {h'(c)}^2]}^2}
    =
    \frac{g'(c)}{{h'(c)}^2}
    - \frac{g(c) h''(c)}{{h'(c)}^3},
  \end{align*}
  which completes the proof.
\end{proof}
Furthermore, we need a family of contours to apply the Residue Theorem with
potentially infinitely many poles so that our theory applies to a much more
general class of systems than the finite-dimensional ones.
\begin{definition}\label{def:contours}
  Let $\fundef{\car_i}{\CC}{\CC}$ be entire and
  $\fundef{a}{\CC}{\CC}$ a linear combination of $\car_i$.
  We call a sequence of closed contours $C_j$, $j = 1, 2, \ldots$,
  \emph{admissible} if
  \begin{enumerate*}[label=(\alph*)]
    \item $C_1$ contains the origin in its interior,
    \item $C_j$ is contained in the interior of $C_{j + 1}$ for all $j \ge 1$,
    \item $C_j$ does not pass through any of the zeros of $a$ for all $j \ge 1$,
    \item $C_j$ crosses the imaginary axis exactly twice for all $j \ge 1$,
    \item $\dist{0, C_j} \to \infty$ as $j \to \infty$,
    \item $\len{C_j} \max_{s \in C_j} \frac{1}{\abs{a(s)}}$ and
    $\len{C_j} \max_{s \in C_j} \frac{\abs{\car_i(s)}}{\abs{a(s)}^2}$
    are bounded, and
    \item for all sufficiently large $j$,
    there is at most one zero of $a$ between $C_j$ and
    $C_{j + 1}$.
  \end{enumerate*}
\end{definition}
Note that, if $\car_i$ are polynomials (i.e., have finitely many poles),
one of which is of degree at least one, and $\deg a \ge \deg \car_i$,
then a sequence of circles with
a common center at the origin,
increasing radius, and
containing all zeros of $a$
will immediately satisfy the assumptions of the above definition.
Additionally, for time-delay systems we consider in \Cref{sec:td},
there exist admissible sequences of contours.

Applying \Cref{lem:res} to \Cref{cor:cond-diag} yields the following result.
\begin{theorem}\label{thm:cond-diag-s}
  Let $H \in \Htwo^{\nout \times \nin}$.
  Furthermore, let $\fundef{\car_i}{\CC}{\CC}$ be entire and
  \begin{equation}\label{eq:a-l2-bounded}
    \int_{-\infty}^{\infty}
    \frac{1}{
      \myparen*{
        \sum_{i = 1}^{\qAr} \abs*{\car_i(\imag \omega)}
      }^{2}
    }
    \dif{\omega}
    < \infty.
  \end{equation}
  Define $\Sigma_{\textnormal{D-St-LTI}}$ as the set of \acp{diagstrom}
  $(\cAr_i, \cBr, \cCr)$ with parameter-independent $\Br$ and $\Cr$ such that
  $\cAr_i \in \CCrr$,
  $\cBr \in \CCri$,
  $\cCr \in \CCor$,
  $a_{\ell}$ has zeros only in $\CC_-$, and
  \begin{equation}\label{eq:a-linf-bounded}
    \sup_{\omega \in \RR} \,
    \frac{
      \abs*{\car_i(\imag \omega)}
    }{
      \abs*{a_{\ell}(\imag \omega)}
    }
    < \infty, \quad
    i = 1, 2, \ldots, \qAr,\
    \ell = 1, 2, \ldots, \nrom,
  \end{equation}
  where $a_{\ell}(s)$ is the $\ell$th diagonal entry of $\Ar(s)$ as defined
  in~\eqref{eq:rom-param-sep-form}.
  Let $(\cAr_i, \cBr, \cCr)$ be an $\Htwo$-optimal \ac{diagstrom} of $H$ in
  $\Sigma_{\textnormal{D-St-LTI}}$.
  Let there exist admissible contours $C_j^{(\ell)}$ for $a_{\ell}$,
  $\ell = 1, 2, \ldots, \nrom$ (see \Cref{def:contours}).
  Additionally, let $\Lambda_{\ell}$ be the set of zeros of $a_{\ell}$,
  ordered according to the contours $C_j^{(\ell)}$.
  Assume that all the zeros of all $a_{\ell}$ are simple and pairwise distinct.
  Let $b_{\ell} = \cBr\herm e_{\ell}$ and $c_{\ell} = \cCr e_{\ell}$.
  Then
  \begin{equation}\label{eq:pole-res-h2-diag}
    \hH(s) = \Cr {\Ar(s)}^{-1} \Br
    = \sum_{\ell = 1}^{\nrom}
    \frac{c_{\ell} b_{\ell}\herm}{a_{\ell}(s)}
  \end{equation}
  and it satisfies the interpolatory optimality conditions
  \begin{subequations}
    \begin{gather}
      \label{eq:cond-diag-s-1}
      \sum_{\lambda \in \Lambda_{\ell}}
      \frac{
        H\myparen*{-\overline{\lambda}}
        b_{\ell}
      }{
        \overline{a_{\ell}'(\lambda)}
      }
      =
      \sum_{\lambda \in \Lambda_{\ell}}
      \frac{
        \hH\myparen*{-\overline{\lambda}}
        b_{\ell}
      }{
        \overline{a_{\ell}'(\lambda)}
      }, \\
      \label{eq:cond-diag-s-2}
      \sum_{\lambda \in \Lambda_{\ell}}
      \frac{
        c_{\ell}\herm
        H\myparen*{-\overline{\lambda}}
      }{
        \overline{a_{\ell}'(\lambda)}
      }
      =
      \sum_{\lambda \in \Lambda_{\ell}}
      \frac{
        c_{\ell}\herm
        \hH\myparen*{-\overline{\lambda}}
      }{
        \overline{a_{\ell}'(\lambda)}
      }, \\
      \label{eq:cond-diag-s-3}
      \begin{aligned}
         &
        \sum_{\lambda \in \Lambda_{\ell}}
        c_{\ell}\herm
        \myparen*{
          \overline{\myparen*{\frac{\car_i(\lambda)}{{a_{\ell}'(\lambda)}^2}}}
          H'\myparen*{-\overline{\lambda}}
          -
          \overline{
            \myparen*{
              \frac{\car_i'(\lambda)}{{a_{\ell}'(\lambda)}^2}
              -
              \frac{\car_i(\lambda) a_{\ell}''(\lambda)}{
                {a_{\ell}'(\lambda)}^3}
            }
          }
          H\myparen*{-\overline{\lambda}}
        }
        b_{\ell} \\*
         & =
        \sum_{\lambda \in \Lambda_{\ell}}
        c_{\ell}\herm
        \myparen*{
          \overline{\myparen*{\frac{\car_i(\lambda)}{{a_{\ell}'(\lambda)}^2}}}
          \hH'\myparen*{-\overline{\lambda}}
          -
          \overline{
            \myparen*{
              \frac{\car_i'(\lambda)}{{a_{\ell}'(\lambda)}^2}
              -
              \frac{\car_i(\lambda) a_{\ell}''(\lambda)}{
                {a_{\ell}'(\lambda)}^3}
            }
          }
          \hH\myparen*{-\overline{\lambda}}
        }
        b_{\ell}.
      \end{aligned}
    \end{gather}
  \end{subequations}
  for $\ell = 1, 2, \ldots, \nrom$ and $i = 1, 2, \ldots, \qAr$.
\end{theorem}
\begin{proof}
  The ``pole-residue'' form~\eqref{eq:pole-res-h2-diag} directly follows
  from~\eqref{eq:diag-pole-res}.
  The condition~\eqref{eq:a-l2-bounded} is equivalent
  to~\eqref{eq:abc-l2-bounded} since $\Br$ and $\Cr$ are parameter-independent.
  Similarly, the condition~\eqref{eq:a-linf-bounded} is equivalent
  to~\eqref{eq:ainv-bounded} since $\Ar(s)$ is diagonal.
  Thus, we can apply \Cref{cor:cond-diag}.
  Then, the condition~\eqref{eq:cond-C-diag} becomes
  \begin{equation}\label{eq:cond-diag-proof-1}
    \int_{-\infty}^{\infty}
    \frac{H(\imag \omega) b_{\ell}}{\overline{a_{\ell}}(-\imag \omega)}
    \dif{\omega}
    =
    \int_{-\infty}^{\infty}
    \frac{\hH(\imag \omega) b_{\ell}}{\overline{a_{\ell}}(-\imag \omega)}
    \dif{\omega},
  \end{equation}
  where $\overline{a_{\ell}}(s) = \overline{a_{\ell}(\bar{s})}$ is analytic.
  Let $\gamma_j$ be a negatively-oriented closed curve consisting of two curves:
  an interval along the imaginary axis $[-\imag r_{j, 1}, \imag r_{j, 2}]$
  denoted by $\gamma_{j, 1}$ and the part of $-\overline{C_j^{(\ell)}}$ in the
  right half-plane denoted by $\gamma_{j, 2}$.
  Then
  \begin{equation*}
    \int_{-r_{j, 1}}^{r_{j, 2}}
    \frac{H(\imag \omega) b_{\ell}}{\overline{a_{\ell}}(-\imag \omega)}
    \dif{\omega}
    =
    \frac{1}{\imag}
    \oint_{\gamma_j}
    \frac{H(s) b_{\ell}}{\overline{a_{\ell}}(-s)}
    \dif{s}
    -
    \frac{1}{\imag}
    \oint_{\gamma_{j, 2}}
    \frac{H(s) b_{\ell}}{\overline{a_{\ell}}(-s)}
    \dif{s}.
  \end{equation*}
  Using the Residue Theorem and \Cref{lem:res}, we obtain
  \begin{equation}\label{eq:cond-diag-proof-2}
    \int_{-r_{j, 1}}^{r_{j, 2}}
    \frac{H(\imag \omega) b_{\ell}}{\overline{a_{\ell}}(-\imag \omega)}
    \dif{\omega}
    =
    -2 \pi
    \sum_{\substack{\lambda \in \Lambda_{\ell} \\
    \lambda \text{ inside } C_j^{\ell}}}
    \frac{
      H\myparen*{-\overline{\lambda}}
      b_{\ell}
    }{
      \overline{a_{\ell}}'\myparen*{\overline{\lambda}}
    }
    -
    \frac{1}{\imag}
    \oint_{\gamma_{j, 2}}
    \frac{H(s) b_{\ell}}{\overline{a_{\ell}}(-s)}
    \dif{s}.
  \end{equation}
  Next, we observe that
  \begin{align*}
    \norm*{
      \oint_{\gamma_{j, 2}}
      \frac{H(s) b_{\ell}}{\overline{a_{\ell}}(-s)}
      \dif{s}
    }
     & \le
    \len{\gamma_{j, 2}}
    \max_{s \in \gamma_{j, 2}}
    \norm*{
      \frac{H(s) b_{\ell}}{\overline{a_{\ell}}(-s)}
    }      \\
     & \le
    \len{C_j^{(\ell)}}
    \max_{s \in C_j^{(\ell)}}
    \frac{1}{\abs{a_{\ell}(s)}}
    \max_{s \in \gamma_{j, 2}}
    \norm*{H(s)}
    \norm*{b_{\ell}}.
  \end{align*}
  Since $H$ is an $\Htwo$ function, we have that
  \(
  \max_{s \in \gamma_{j, 2}}
  \norm*{H(s)}
  \)
  converges to zero as $j \to \infty$.
  Therefore, applying $\lim_{j \to \infty}$ to both sides
  in~\eqref{eq:cond-diag-proof-2} gives us
  \begin{equation*}
    \int_{-\infty}^{\infty}
    \frac{H(\imag \omega) b_{\ell}}{\overline{a_{\ell}}(-\imag \omega)}
    \dif{\omega}
    =
    -2 \pi
    \sum_{\lambda \in \Lambda_{\ell}}
    \frac{
      H\myparen*{-\overline{\lambda}}
      b_{\ell}
    }{
      \overline{a_{\ell}}'\myparen*{\overline{\lambda}}
    }.
  \end{equation*}
  Deriving the same expression for $\hH$ and inserting both
  into~\eqref{eq:cond-diag-proof-1} implies condition~\eqref{eq:cond-diag-s-1}.
  Condition~\eqref{eq:cond-diag-s-2} is obtained in a similar way.

  Condition~\eqref{eq:cond-A-diag} becomes
  \begin{align*}
    \int_{-\infty}^{\infty}
    \frac{
      \overline{\car_i}(-\imag \omega)
      c_{\ell}\herm
      H(\imag \omega)
      b_{\ell}
    }{
      {\overline{a_{\ell}}(-\imag \omega)}^2
    }
    \dif{\omega}
     & =
    \int_{-\infty}^{\infty}
    \frac{
      \overline{\car_i}(-\imag \omega)
      c_{\ell}\herm
      \hH(\imag \omega)
      b_{\ell}
    }{
      {\overline{a_{\ell}}(-\imag \omega)}^2
    }
    \dif{\omega},
     & i = 1, 2, \ldots, \qAr,
  \end{align*}
  where $\overline{\car_i}(s) = \overline{\car_i(\bar{s})}$ is analytic.
  Following the same procedure as above and noting that
  \begin{equation*}
    \abs*{
      \oint_{\gamma_{j, 2}}
      \frac{
        \overline{\car_i}(-s)
        c_{\ell}\herm
        H(s)
        b_{\ell}
      }{
        {\overline{a_{\ell}}(-s)}^2
      }
      \dif{s}
    }
    \le
    \len{C_j^{(\ell)}}
    \max_{s \in C_j^{(\ell)}}
    \frac{\abs{\car_i(s)}}{\abs{a_{\ell}(s)}^2}
    \max_{s \in \gamma_{j, 2}}
    \norm*{H(s)}
    \norm*{b_{\ell}}
    \norm*{c_{\ell}}
  \end{equation*}
  converges to zero as $j \to \infty$, we obtain that
  \begin{align*}
     &
    \int_{-\infty}^{\infty}
    \frac{
      \overline{\car_i}(-\imag \omega)
      c_{\ell}\herm
      H(\imag \omega)
      b_{\ell}
    }{
      {\overline{a_{\ell}}(-\imag \omega)}^2
    }
    \dif{\omega} \\
     & =
    -2 \pi
    \sum_{\lambda \in \Lambda_{\ell}}
    c_{\ell}\herm
    \myparen*{
      \frac{
        -\overline{\car_i}'\myparen*{\overline{\lambda}}
        H\myparen*{-\overline{\lambda}}
        + \overline{\car_i}\myparen*{\overline{\lambda}}
        H'\myparen*{-\overline{\lambda}}
      }{
        \overline{a_{\ell}}'\myparen*{\overline{\lambda}}^2
      }
      +
      \frac{
        \overline{\car_i}\myparen*{\overline{\lambda}}
        \overline{a_{\ell}}''\myparen*{\overline{\lambda}}
        H\myparen*{-\overline{\lambda}}
      }{
        \overline{a_{\ell}}'\myparen*{\overline{\lambda}}^3
      }
    }
    b_{\ell}
  \end{align*}
  After simplifying and using the same expression for $\hH$, we have
  \begin{align*}
     &
    \sum_{\lambda \in \Lambda_{\ell}}
    c_{\ell}\herm
    \myparen*{
      \frac{
        \overline{\car_i}\myparen*{\overline{\lambda}}
        H'\myparen*{-\overline{\lambda}}
      }{
        \overline{a_{\ell}}'\myparen*{\overline{\lambda}}^2
      }
      -
      \myparen*{
        \frac{
          \overline{\car_i}'\myparen*{\overline{\lambda}}
        }{
          \overline{a_{\ell}}'\myparen*{\overline{\lambda}}^2
        }
        -
        \frac{
          \overline{\car_i}\myparen*{\overline{\lambda}}
          \overline{a_{\ell}}''\myparen*{\overline{\lambda}}
        }{
          \overline{a_{\ell}}'\myparen*{\overline{\lambda}}^3
        }
      }
      H\myparen*{-\overline{\lambda}}
    }
    b_{\ell} \\*
     & =
    \sum_{\lambda \in \Lambda_{\ell}}
    c_{\ell}\herm
    \myparen*{
      \frac{
        \overline{\car_i}\myparen*{\overline{\lambda}}
        \hH'\myparen*{-\overline{\lambda}}
      }{
        \overline{a_{\ell}}'\myparen*{\overline{\lambda}}^2
      }
      -
      \myparen*{
        \frac{
          \overline{\car_i}'\myparen*{\overline{\lambda}}
        }{
          \overline{a_{\ell}}'\myparen*{\overline{\lambda}}^2
        }
        -
        \frac{
          \overline{\car_i}\myparen*{\overline{\lambda}}
          \overline{a_{\ell}}''\myparen*{\overline{\lambda}}
        }{
          \overline{a_{\ell}}'\myparen*{\overline{\lambda}}^3
        }
      }
      \hH\myparen*{-\overline{\lambda}}
    }
    b_{\ell}.
  \end{align*}
  This gives condition~\eqref{eq:cond-diag-s-3}.
\end{proof}
In the classical unstructured case~\eqref{eq:lti-rom},
we have $a_{\ell}(s) = s - \lambda_{\ell}$.
Since $a_{\ell}'(s) = 1$ and $a_{\ell}''(s) = 0$, and
we see that \Cref{thm:cond-diag-s} recovers the known result~\eqref{eq:lti-oc}
as a special case.
Thus, we have extended the well-known interpolatory $\Htwo$-optimality
conditions for classical \ac{lti} systems with the pole-residue
form~\eqref{eq:pole-res-unstruct} to
structured \ac{lti} systems with a generalized
``pole-residue''-like form~\eqref{eq:pole-res-h2-diag}, allowing structured
forms to be encoded in the denominator $a_\ell(s)$.
In the following sections,
by adjusting $a_\ell(s)$ to the structures of interest,
we will develop new interpolatory $\Htwo$ optimality conditions for important
classes of structured \ac{lti} systems.

\section{Second-order Systems}%
\label{sec:so}
In this section, we consider a prominent class of \acp{strom},
namely the \ac{lti} second-order systems of the form
\begin{subequations}\label{eq:so-sys-r}
  \begin{align}
    \ddot{\hx}(t) + \hE \dot{\hx}(t) + \hK \hx(t) & = \hB u(t),   \\*
    \hy(t)                                        & = \hC \hx(t),
  \end{align}
\end{subequations}
where
$\hE, \hK \in \RRff$ are, respectively, the damping and stiffness matrices;
$\hB \in \RRfi$ is the input-to-state map;
$\hC \in \RRof$ is the state-to-output map;
$\hx(t) \in \RRf$ is the internal state;
$u(t) \in \RRi$ are the inputs, and
$\hy(t) \in \RRo$ are outputs.
It is common to also include an invertible mass matrix $\hM$ multiplying the
second derivative $\ddot{\hx}$,
but here we take it to be the identity matrix without loss of generality.
We denote such systems by $(\hE, \hK, \hB, \hC)$.
These systems appear frequently, e.g.,
in analyzing mechanical or electrical systems
(see, e.g.,~\cite{ReiS08} and references within).
The transfer function of~\eqref{eq:so-sys-r} is given by
\begin{equation*}
  \hH(s) = \hC \myparen*{s^2 \hI + s \hE + \hK}^{-1} \hB.
\end{equation*}
We assume that the matrix pencil $\lambda^2 \hI + \lambda \hE + \hK$
is asymptotically stable,
which makes the properties in~\eqref{eq:a-l2-bounded}
and~\eqref{eq:a-linf-bounded} true.
Again, the transfer function $H$ of the \ac{fom} can be associated to a
finite-dimensional, second-order system,
but that is not necessary in the following
(it is enough that it has a finite $\Htwo$ norm).

There are various approaches to model reduction of second-order systems
using different error measures.
We refer the reader to~\cite{Wer21,SaaSW19} for an overview.
Here our focus is on interpolatory methods,
more precisely in establishing the interpolatory conditions
for minimizing the $\Htwo$ error~\eqref{eq:h2-error}.

Structure-preserving interpolatory reduced-order modeling of second-order
systems has been studied in detail,
see, e.g.,~\cite{BonFSZ16,BeaG09,ChaGVV05,BaiS05,Bai02,BeaG05} and
the references therein.
Inspired by \ac{irka}~\cite{GugAB08} for $\Htwo$-optimal reduced-order modeling
of unstructured systems,
Wyatt~\cite{Wya12} proposed several iterative methods
for reducing second-order dynamics with a focus on the $\Htwo$ norm.
However, the second-order $\Htwo$-optimality conditions were not established and
thus the resulting reduced models did not satisfy
the true optimality conditions.
The interpolatory $\Htwo$-optimality conditions
for reducing second-order systems
were derived by Beattie and Benner~\cite{BeaB14}
where the \ac{rom} was assumed to be modally damped, i.e.,
$\hE$ and $\hK$ are symmetric positive definite and
simultaneously diagonalizable.

\subsection{Interpolatory Conditions}

Starting with the general optimality conditions in \Cref{thm:cond-diag-s} and
by appropriately choosing the parameters to reflect the second-order dynamics,
we first derive the interpolatory optimality conditions for $\Htwo$-optimal
reduced-order modeling of second-order systems for the special case of
modally-damped systems.
In other words,
we show that the optimality conditions of~\cite[Section~5]{BeaB14}
follow as a special case of our general framework from \Cref{thm:cond-diag-s}.
These interpolatory conditions involve mixed terms and
do not follow the optimal bitangential Hermite interpolation formulation of the
unstructured case~\eqref{eq:lti-oc}.
But we then show that these conditions can be interpreted
as a bitangential Hermite interpolation
of not the original transfer function $H$
but of a modified multivariate transfer function;
thus showing that the classical bitangential Hermite interpolation
forms the optimality in the second-order dynamics case as well.

As in~\cite{BeaB14},
we assume that the reduced second-order model is modally damped.
Therefore, using a state space transformation,
the \ac{strom} can be brought to a form where $\hE$ and $\hK$ are real diagonal,
thus making $(\hE, \hK, \hB, \hC)$ a \ac{diagstrom}.
In particular, the matrix $\lambda^2 \hI + \lambda \hE + \hK$ is diagonal
with quadratic polynomials on its diagonal, and
thus can be decomposed into
$(\lambda \hI - \Lambda^+) (\lambda \hI - \Lambda^-)$
for some complex diagonal matrices $\Lambda^+$ and $\Lambda^-$.
This is precisely what we use in the next result.
\begin{theorem}\label{thm:so-interp}
  Let $H \in \Htwo^{\nout \times \nin}$ and
  $\Sigma_{\textnormal{D-SO}}$ be the set of second-order \acp{diagstrom}
  $(\hE, \hK, \hB, \hC)$ in~\eqref{eq:so-sys-r} such that
  $\hE, \hK \in \RRrr$ are diagonal,
  $\lambda^2 \hI + \lambda \hE + \hK$ has all of its eigenvalues in $\CC_-$,
  $\hB \in \RRri$, and
  $\hC \in \RRor$.
  Let $(\hE, \hK, \hB, \hC)$ be an $\Htwo$-optimal second-order \ac{diagstrom}
  for $H$ in $\Sigma_{\textnormal{D-SO}}$.
  Let
  $\Lambda^+ = \mydiag{\lambda_i^+}$ and
  $\Lambda^- = \mydiag{\lambda_i^-}$
  be complex diagonal matrices such that
  $\hE = -(\Lambda^+ + \Lambda^-)$ and
  $\hK = \Lambda^+ \Lambda^-$.
  Additionally,
  let all $\lambda_i^+$ and $\lambda_j^-$ be pairwise distinct.
  Define $c_i = \hC e_i$ and $b_i = \hB\tran e_i$.
  Then the transfer function $\hH$ of $(\hE, \hK, \hB, \hC)$ can be written as
  \begin{equation}\label{eq:modtf}
    \hH(s)
    = \hC \myparen*{s^2 \hI + s \hE + \hK}^{-1} \hB
    = \sum_{i = 1}^{\nrom}
    \frac{c_i b_i\tran}{\myparen*{s - \lambda_i^+} \myparen*{s - \lambda_i^-}}
  \end{equation}
  and it satisfies the interpolatory optimality conditions
  \begin{subequations}\label{eq:soc}
    \begin{align}
      \label{eq:soc1}
      \myparen*{
        H\myparen*{-\overline{\lambda_i^+}}
        - H\myparen*{-\overline{\lambda_i^-}}
      }
      b_i
       & =
      \myparen*{
        \hH\myparen*{-\overline{\lambda_i^+}}
        - \hH\myparen*{-\overline{\lambda_i^-}}
      }
      b_i, \\
      \label{eq:soc2}
      c_i\tran
      \myparen*{
        H\myparen*{-\overline{\lambda_i^+}}
        - H\myparen*{-\overline{\lambda_i^-}}
      }
       & =
      c_i\tran
      \myparen*{
        \hH\myparen*{-\overline{\lambda_i^+}}
        - \hH\myparen*{-\overline{\lambda_i^-}}
      },   \\
      \label{eq:soc3}
      c_i\tran
      H'\myparen*{-\overline{\lambda_i^+}}
      b_i
       & =
      c_i\tran
      \hH'\myparen*{-\overline{\lambda_i^+}}
      b_i, \\
      \label{eq:soc4}
      c_i\tran
      H'\myparen*{-\overline{\lambda_i^-}}
      b_i
       & =
      c_i\tran
      \hH'\myparen*{-\overline{\lambda_i^-}}
      b_i,
    \end{align}
  \end{subequations}
  for $i = 1, 2, \dots, \nrom$.
\end{theorem}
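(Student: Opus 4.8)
The plan is to recognize the \ac{rom}~\eqref{eq:so-sys-r}, in the assumed coordinates with $\hM = I$ and $\hE, \hK$ real diagonal, as a \ac{diagstrom} and then to specialize \Cref{thm:cond-diag-s}. Here $\qAr = 3$, with coefficient functions $\car_1(s) = s^2$, $\car_2(s) = s$, $\car_3(s) = 1$ and matrices $\cAr_1 = I$, $\cAr_2 = \hE$, $\cAr_3 = \hK$, so that the $\ell$th diagonal entry of $\Ar(s) = s^2 I + s\hE + \hK$ is
\[
  a_\ell(s)
  = s^2 + \hE_{\ell\ell}\,s + \hK_{\ell\ell}
  = \myparen*{s - \lambda_\ell^+}\myparen*{s - \lambda_\ell^-},
\]
with $c_\ell = \hC e_\ell$ and $b_\ell = \hB\tran e_\ell$. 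I would first verify the hypotheses of \Cref{thm:cond-diag-s}: asymptotic stability of the pencil $\lambda^2\hM + \lambda\hE + \hK$ puts the zero set $\Lambda_\ell = \{\lambda_\ell^+, \lambda_\ell^-\}$ of $a_\ell$ into $\CC_-$; the assumed pairwise distinctness of all the $\lambda_i^\pm$ makes these zeros simple and the sets $\Lambda_\ell$ pairwise disjoint; the $\car_i$ are polynomials, hence analytic on $\CC_+$; and the boundedness conditions~\eqref{eq:abc-l2-bounded} and~\eqref{eq:ainv-bounded} required by \Cref{thm:grad} hold since $\hM$ is invertible and the pencil is asymptotically stable. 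The ``pole-residue'' form~\eqref{eq:modtf} is then immediate from~\eqref{eq:pole-res-h2-diag}. I would also record, for later use, $a_\ell'(s) = 2s - (\lambda_\ell^+ + \lambda_\ell^-)$, so that $a_\ell'(\lambda_\ell^+) = \lambda_\ell^+ - \lambda_\ell^- =: \delta_\ell \ne 0$, $a_\ell'(\lambda_\ell^-) = -\delta_\ell$, and $a_\ell''(s) \equiv 2$.

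The Lagrange-type conditions~\eqref{eq:soc1} and~\eqref{eq:soc2} follow directly from~\eqref{eq:cond-diag-s-1} and~\eqref{eq:cond-diag-s-2}: each sum over $\Lambda_\ell$ has only two terms, and the weights $1/\overline{a_\ell'(\lambda_\ell^\pm)} = \pm 1/\overline{\delta_\ell}$ combine the two evaluations of $H$ (and of $\hH$) into a single difference scaled by $1/\overline{\delta_\ell}$; cancelling the nonzero factor $\overline{\delta_\ell}$ leaves exactly~\eqref{eq:soc1} and~\eqref{eq:soc2}. This step is routine.

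The substance of the proof is to extract the two bitangential Hermite conditions~\eqref{eq:soc3} and~\eqref{eq:soc4} from the three instances of~\eqref{eq:cond-diag-s-3} corresponding to $\car_1, \car_2, \car_3$. Writing $E := H - \hH$ and using $a_\ell'(\lambda_\ell^\pm) = \pm\delta_\ell$, $a_\ell'' \equiv 2$, the instance of~\eqref{eq:cond-diag-s-3} for a coefficient $\car_i$ becomes a two-term relation over $\lambda \in \{\lambda_\ell^+, \lambda_\ell^-\}$ among the numbers $c_\ell\tran E\myparen*{-\overline\lambda}b_\ell$ and $c_\ell\tran E'\myparen*{-\overline\lambda}b_\ell$, with weights $\overline{\car_i}$ and $\overline{\car_i'}$ evaluated at $\lambda$ over powers of $\overline{\delta_\ell}$. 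Since~\eqref{eq:cond-diag-s-3} depends linearly on the coefficient function, combining the three instances (with suitable conjugated scalar weights) yields the analogous relation for \emph{every} polynomial of degree at most two. Applying this to the two degree-one Lagrange polynomials for the nodes $\lambda_\ell^+, \lambda_\ell^-$,
\[
  p_\ell^+(s) = \frac{s - \lambda_\ell^-}{\lambda_\ell^+ - \lambda_\ell^-},
  \qquad
  p_\ell^-(s) = \frac{s - \lambda_\ell^+}{\lambda_\ell^- - \lambda_\ell^+},
\]
and using that $p_\ell^{\pm}$ vanishes at one node, equals $1$ at the other, and has constant derivative $\pm 1/\delta_\ell$, the two-term sum collapses to a single contribution proportional to $c_\ell\tran E'\myparen*{-\overline{\lambda_\ell^{\pm}}}b_\ell$ plus a multiple of $c_\ell\tran\bigl(E\myparen*{-\overline{\lambda_\ell^+}} - E\myparen*{-\overline{\lambda_\ell^-}}\bigr)b_\ell$; the latter is zero, being condition~\eqref{eq:soc2} multiplied on the right by $b_\ell$. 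Hence $c_\ell\tran E'\myparen*{-\overline{\lambda_\ell^+}}b_\ell = 0$ (from $p_\ell^+$) and $c_\ell\tran E'\myparen*{-\overline{\lambda_\ell^-}}b_\ell = 0$ (from $p_\ell^-$), which are exactly~\eqref{eq:soc3} and~\eqref{eq:soc4}. (Alternatively, the polynomial-combination step can be bypassed: after invoking~\eqref{eq:soc2}, the $\car_3$- and $\car_2$-instances of~\eqref{eq:cond-diag-s-3} reduce to $p + q = 0$ and $\overline{\lambda_\ell^+}\,p + \overline{\lambda_\ell^-}\,q = 0$ for $p := c_\ell\tran E'\myparen*{-\overline{\lambda_\ell^+}}b_\ell$ and $q := c_\ell\tran E'\myparen*{-\overline{\lambda_\ell^-}}b_\ell$, forcing $p = q = 0$ since $\lambda_\ell^+ \ne \lambda_\ell^-$, with the $\car_1$-instance then redundant.)

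The step I expect to demand the most care is this last one: evaluating the rational weights in~\eqref{eq:cond-diag-s-3} at $\lambda_\ell^{\pm}$ while tracking the sign flips induced by $a_\ell'(\lambda_\ell^-) = -\delta_\ell$, and checking that all cross terms cancel once~\eqref{eq:soc2} has been substituted. The guiding idea is conceptual rather than computational: the mixed-looking conditions~\eqref{eq:cond-diag-s-3} disentangle into the clean bitangential Hermite conditions~\eqref{eq:soc3}--\eqref{eq:soc4} at the individual mirrored poles precisely because the three coefficient functions $1, s, s^2$ of the second-order structure span enough room to realize an arbitrary quadratic, in particular the Lagrange basis at any two nodes.
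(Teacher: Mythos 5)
Your proposal is correct and follows essentially the same route as the paper: specialize \Cref{thm:cond-diag-s} with $a_\ell(s)=(s-\lambda_\ell^+)(s-\lambda_\ell^-)$, read off~\eqref{eq:soc1}--\eqref{eq:soc2} from the two-term sums, and then decouple~\eqref{eq:soc3}--\eqref{eq:soc4} by combining the $\car_2$- and $\car_3$-instances of~\eqref{eq:cond-diag-s-3} after the cross terms cancel via the Lagrange conditions. Your Lagrange-basis packaging is just a re-parametrization of the paper's explicit $2\times 2$ Vandermonde inversion (which is exactly your parenthetical ``alternative''), and your computations of $a_\ell'(\lambda_\ell^\pm)=\pm(\lambda_\ell^+-\lambda_\ell^-)$, $a_\ell''\equiv 2$ match the paper's $\kappa_i$ bookkeeping.
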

\begin{proof}
  We start by using
  $s^2 \hI + s \hE + \hK
    = (s \hI - \Lambda^+) (s \hI - \Lambda^-)$
  to obtain
  \begin{align*}
    \hH(s)
     & =
    \hC
    \myparen*{s^2 \hI + s \hE + \hK}^{-1}
    \hB
    =
    \hC
    \myparen*{s \hI - \Lambda^+}^{-1}
    \myparen*{s \hI - \Lambda^-}^{-1}
    \hB  \\
     & =
    \sum_{i = 1}^{\nrom}
    \frac{
      c_i
      b_i\tran
    }{
      \myparen*{s - \lambda_i^+}
      \myparen*{s - \lambda_i^-}
    },
  \end{align*}
  which proves~\eqref{eq:modtf}.
  We can apply \Cref{thm:cond-diag-s} with
  $\car_1(s) = s^2$,
  $\car_2(s) = s$,
  $\car_3(s) = 1$
  to recover explicit optimality conditions
  where $a_i(s) = (s - \lambda_i^+) (s - \lambda_i^-)$.
  Note that $a_i'(s) = (s - \lambda_i^+) + (s - \lambda_i^-)$ and
  $a_i''(s) = 2$.
  To simplify the notation, define
  $\kappa_i = \frac{1}{\lambda_i^+ - \lambda_i^-}$.

  From condition~\eqref{eq:cond-diag-s-1} in \Cref{thm:cond-diag-s},
  we obtain
  \begin{equation*}
    \myparen*{
      \overline{\kappa_i}
      H\myparen*{-\overline{\lambda_i^+}}
      -
      \overline{\kappa_i}
      H\myparen*{-\overline{\lambda_i^-}}
    }
    b_i
    =
    \myparen*{
      \overline{\kappa_i}
      \hH\myparen*{-\overline{\lambda_i^+}}
      -
      \overline{\kappa_i}
      \hH\myparen*{-\overline{\lambda_i^-}}
    }
    b_i.
  \end{equation*}
  Dividing by $\overline{\kappa_i}$ gives~\eqref{eq:soc1}.
  Similarly, condition~\eqref{eq:cond-diag-s-2} in \Cref{thm:cond-diag-s}
  yields~\eqref{eq:soc2}.

  Using condition~\eqref{eq:cond-diag-s-3} with $\car_3$
  (corresponding to $\hK$)
  in \Cref{thm:cond-diag-s},
  we obtain
  \begin{align*}
     &
    c_i\tran
    \myparen*{
      \overline{\kappa_i}^2
      H'\myparen*{-\overline{\lambda_i^+}}
      +
      2
      \overline{\kappa_i}^3
      H\myparen*{-\overline{\lambda_i^+}}
      +
      \overline{\kappa_i}^2
      H'\myparen*{-\overline{\lambda_i^-}}
      -
      2
      \overline{\kappa_i}^3
      H\myparen*{-\overline{\lambda_i^-}}
    }
    b_i  \\*
     & =
    c_i\tran
    \myparen*{
      \overline{\kappa_i}^2
      \hH'\myparen*{-\overline{\lambda_i^+}}
      +
      2
      \overline{\kappa_i}^3
      \hH\myparen*{-\overline{\lambda_i^+}}
      +
      \overline{\kappa_i}^2
      \hH'\myparen*{-\overline{\lambda_i^-}}
      -
      2
      \overline{\kappa_i}^3
      \hH\myparen*{-\overline{\lambda_i^-}}
    }
    b_i.
  \end{align*}
  Dividing by $\overline{\kappa_i}^2$ gives
  \begin{align*}
     &
    c_i\tran
    \myparen*{
      \myparen*{
        H'\myparen*{-\overline{\lambda_i^+}}
        + H'\myparen*{-\overline{\lambda_i^-}}
      }
      + 2
      \overline{\kappa_i}
      \myparen*{
        H\myparen*{-\overline{\lambda_i^+}}
        - H\myparen*{-\overline{\lambda_i^-}}
      }
    }
    b_i \\*
     &
    =
    c_i\tran
    \myparen*{
      \myparen*{
        \hH'\myparen*{-\overline{\lambda_i^+}}
        + \hH'\myparen*{-\overline{\lambda_i^-}}
      }
      + 2
      \overline{\kappa_i}
      \myparen*{
        \hH\myparen*{-\overline{\lambda_i^+}}
        - \hH\myparen*{-\overline{\lambda_i^-}}
      }
    }
    b_i.
  \end{align*}
  Since the terms next to $\overline{\kappa_i}$ cancel out
  using~\eqref{eq:soc1}, the last formula simplifies to
  \begin{equation}\label{eq:cond-so-bi-1}
    c_i\tran
    H'\myparen*{-\overline{\lambda_i^+}}
    b_i
    +
    c_i\tran
    H'\myparen*{-\overline{\lambda_i^-}}
    b_i
    =
    c_i\tran
    \hH'\myparen*{-\overline{\lambda_i^+}}
    b_i
    +
    c_i\tran
    \hH'\myparen*{-\overline{\lambda_i^-}}
    b_i.
  \end{equation}
  From condition~\eqref{eq:cond-diag-s-3} with $\car_2$
  (corresponding to $\hE$)
  in \Cref{thm:cond-diag-s},
  we obtain
  \begin{align*}
     &
    \overline{\lambda_i^+}
    \overline{\kappa_i}^2
    c_i\tran
    H'\myparen*{-\overline{\lambda_i^+}}
    b_i
    -
    \myparen*{
      \overline{\kappa_i}^2
      -
      2
      \overline{\lambda_i^+}
      \overline{\kappa_i}^3
    }
    c_i\tran
    H\myparen*{-\overline{\lambda_i^+}}
    b_i        \\*
     & +
    \overline{\lambda_i^-}
    \overline{\kappa_i}^2
    c_i\tran
    H'\myparen*{-\overline{\lambda_i^-}}
    b_i
    -
    \myparen*{
      \overline{\kappa_i}^2
      +
      2
      \overline{\lambda_i^-}
      \overline{\kappa_i}^3
    }
    c_i\tran
    H\myparen*{-\overline{\lambda_i^-}}
    b_i        \\*
     &
    =
    \overline{\lambda_i^+}
    \overline{\kappa_i}^2
    c_i\tran
    \hH'\myparen*{-\overline{\lambda_i^+}}
    b_i
    -
    \myparen*{
      \overline{\kappa_i}^2
      -
      2
      \overline{\lambda_i^+}
      \overline{\kappa_i}^3
    }
    c_i\tran
    \hH\myparen*{-\overline{\lambda_i^+}}
    b_i        \\*
     & \quad +
    \overline{\lambda_i^-}
    \overline{\kappa_i}^2
    c_i\tran
    \hH'\myparen*{-\overline{\lambda_i^-}}
    b_i
    -
    \myparen*{
      \overline{\kappa_i}^2
      +
      2
      \overline{\lambda_i^-}
      \overline{\kappa_i}^3
    }
    c_i\tran
    \hH\myparen*{-\overline{\lambda_i^-}}
    b_i.
  \end{align*}
  Dividing by $\overline{\kappa_i}^3$ gives
  \begin{align*}
     &
    \myparen*{\overline{\lambda_i^+} - \overline{\lambda_i^-}}
    \overline{\lambda_i^+}
    c_i\tran
    H'\myparen*{-\overline{\lambda_i^+}}
    b_i
    +
    \myparen*{\overline{\lambda_i^+} + \overline{\lambda_i^-}}
    c_i\tran
    H\myparen*{-\overline{\lambda_i^+}}
    b_i        \\*
     & +
    \myparen*{\overline{\lambda_i^+} - \overline{\lambda_i^-}}
    \overline{\lambda_i^-}
    c_i\tran
    H'\myparen*{-\overline{\lambda_i^-}}
    b_i
    -
    \myparen*{\overline{\lambda_i^+} + \overline{\lambda_i^-}}
    c_i\tran
    H\myparen*{-\overline{\lambda_i^-}}
    b_i        \\*
     & =
    \myparen*{\overline{\lambda_i^+} - \overline{\lambda_i^-}}
    \overline{\lambda_i^+}
    c_i\tran
    \hH'\myparen*{-\overline{\lambda_i^+}}
    b_i
    +
    \myparen*{\overline{\lambda_i^+} + \overline{\lambda_i^-}}
    c_i\tran
    \hH\myparen*{-\overline{\lambda_i^+}}
    b_i        \\*
     & \quad +
    \myparen*{\overline{\lambda_i^+} - \overline{\lambda_i^-}}
    \overline{\lambda_i^-}
    c_i\tran
    \hH'\myparen*{-\overline{\lambda_i^-}}
    b_i
    -
    \myparen*{\overline{\lambda_i^+} + \overline{\lambda_i^-}}
    c_i\tran
    \hH\myparen*{-\overline{\lambda_i^-}}
    b_i.
  \end{align*}
  Since the terms next to $\overline{\lambda_i^+} + \overline{\lambda_i^-}$
  cancel out using~\eqref{eq:soc1}, this simplifies to
  \begin{equation}\label{eq:cond-so-bi-2}
    \overline{\lambda_i^+}
    c_i\tran
    H'\myparen*{-\overline{\lambda_i^+}}
    b_i
    +
    \overline{\lambda_i^-}
    c_i\tran
    H'\myparen*{-\overline{\lambda_i^-}}
    b_i
    =
    \overline{\lambda_i^+}
    c_i\tran
    \hH'\myparen*{-\overline{\lambda_i^+}}
    b_i
    +
    \overline{\lambda_i^-}
    c_i\tran
    \hH'\myparen*{-\overline{\lambda_i^-}}
    b_i.
  \end{equation}
  Note that~\eqref{eq:cond-so-bi-1} and~\eqref{eq:cond-so-bi-2}
  can be merged together to give
  \begin{equation*}
    \renewcommand{\arraystretch}{1.5}
    \begin{bmatrix}
      1                      & 1                      \\
      \overline{\lambda_i^+} & \overline{\lambda_i^-}
    \end{bmatrix}
    \begin{bmatrix}
      c_i\tran H'\myparen*{-\overline{\lambda_i^+}} b_i \\
      c_i\tran H'\myparen*{-\overline{\lambda_i^-}} b_i
    \end{bmatrix}
    =
    \begin{bmatrix}
      1                      & 1                      \\
      \overline{\lambda_i^+} & \overline{\lambda_i^-}
    \end{bmatrix}
    \begin{bmatrix}
      c_i\tran \hH'\myparen*{-\overline{\lambda_i^+}} b_i \\
      c_i\tran \hH'\myparen*{-\overline{\lambda_i^-}} b_i
    \end{bmatrix}.
  \end{equation*}
  Since we assumed that $\lambda_i^+ \neq \lambda_i^-$,
  it follows that
  \begin{equation*}
    c_i\tran H'\myparen*{-\overline{\lambda_i^+}} b_i
    = c_i\tran \hH'\myparen*{-\overline{\lambda_i^+}} b_i
    \quad \text{and} \quad
    c_i\tran H'\myparen*{-\overline{\lambda_i^-}} b_i
    = c_i\tran \hH'\myparen*{-\overline{\lambda_i^-}} b_i,
  \end{equation*}
  proving~\eqref{eq:soc3} and~\eqref{eq:soc4}, and completing the proof.
\end{proof}
Therefore, our general framework for optimality
as given in \Cref{thm:cond-diag-s} and \Cref{cor:cond}
recovers the interpolatory optimality conditions from~\cite{BeaB14} as a special
case.

At a first glance,
the interpolatory optimality conditions~\eqref{eq:soc}
for second-order structures are different from those
of the classical bitangential Hermite interpolation conditions
of the unstructured $\Htwo$ approximation problem~\eqref{eq:lti-oc}.
More specifically, even though the bitangential Hermite
conditions~\eqref{eq:soc3}--\eqref{eq:soc4} resemble the classical case,
the left- and right-tangential Lagrange
conditions~\eqref{eq:soc1}--\eqref{eq:soc2} are rather different since they
impose interpolating a difference as opposed to the original transfer function.
However, these new interpolatory conditions can still be interpreted
as bitangential Hermite conditions for a modified transfer function
as we show next.
\begin{corollary}\label{cor:sobh}
  Let the assumptions in \Cref{thm:so-interp} hold.
  Define the 2D full-order and reduced-order transfer functions
  \begin{equation}\label{eq:Gs1s2}
    G(s_1, s_2) = H(s_1) - H(s_2)
    \quad \text{and} \quad
    \hG(s_1, s_2) = \hH(s_1) - \hH(s_2).
  \end{equation}
  Then, the optimality conditions~\eqref{eq:soc1}--\eqref{eq:soc4} are,
  respectively, equivalent to
  \begin{subequations}\label{eq:sobh}
    \begin{align}
      \label{eq:sobh1}
      G\myparen*{-\overline{\lambda_i^+}, -\overline{\lambda_i^-}}
      b_i
       & =
      \hG\myparen*{-\overline{\lambda_i^+}, -\overline{\lambda_i^-}}
      b_i,                                                            \\
      \label{eq:sobh2}
      c_i\tran
      G\myparen*{-\overline{\lambda_i^+}, -\overline{\lambda_i^-}}
       & =
      c_i\tran
      \hG\myparen*{-\overline{\lambda_i^+}, -\overline{\lambda_i^-}}, \\
      \label{eq:sobh3}
      c_i\tran
      \frac{\partial G}{\partial s_1}
      \myparen*{-\overline{\lambda_i^+}, -\overline{\lambda_i^-}}
      b_i
       & =
      c_i\tran
      \frac{\partial \hG}{\partial s_1}
      \myparen*{-\overline{\lambda_i^+}, -\overline{\lambda_i^-}}
      b_i,                                                            \\
      \label{eq:sobh4}
      c_i\tran
      \frac{\partial G}{\partial s_2}
      \myparen*{-\overline{\lambda_i^+}, -\overline{\lambda_i^-}}
      b_i
       & =
      c_i\tran
      \frac{\partial \hG}{\partial s_2}
      \myparen*{-\overline{\lambda_i^+}, -\overline{\lambda_i^-}}
      b_i,
    \end{align}
  \end{subequations}
  for $i = 1, 2, \dots, \nrom$.
\end{corollary}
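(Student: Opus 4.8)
The plan is to rewrite the four optimality conditions \eqref{eq:soc1}--\eqref{eq:soc4} of \Cref{thm:so-interp} directly in terms of the bivariate functions $G$ and $\hG$ from~\eqref{eq:Gs1s2} by recognizing that each condition is already, after a trivial substitution, an evaluation of $G$ (or a partial derivative of $G$) at the point $(s_1, s_2) = (-\overline{\lambda_i^+}, -\overline{\lambda_i^-})$. Since $G$ and $\hG$ have exactly the same functional form in terms of $H$ and $\hH$, any identity relating $G$ and $\hG$ is equivalent to the corresponding identity relating $H$ and $\hH$; this observation is what makes the corollary essentially a bookkeeping exercise.

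First I would handle the Lagrange conditions~\eqref{eq:sobh1}--\eqref{eq:sobh2}. By definition, $G\myparen*{-\overline{\lambda_i^+}, -\overline{\lambda_i^-}} = H\myparen*{-\overline{\lambda_i^+}} - H\myparen*{-\overline{\lambda_i^-}}$, and likewise for $\hG$. Thus~\eqref{eq:sobh1} reads
\begin{equation*}
  \myparen*{H\myparen*{-\overline{\lambda_i^+}} - H\myparen*{-\overline{\lambda_i^-}}} b_i
  = \myparen*{\hH\myparen*{-\overline{\lambda_i^+}} - \hH\myparen*{-\overline{\lambda_i^-}}} b_i,
\end{equation*}
which is precisely~\eqref{eq:soc1}; and~\eqref{eq:sobh2} is~\eqref{eq:soc2} in the same way, pre-multiplying by $c_i\tran$ instead.

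Next I would handle the Hermite conditions~\eqref{eq:sobh3}--\eqref{eq:sobh4}. Differentiating $G(s_1,s_2) = H(s_1) - H(s_2)$ gives $\frac{\partial G}{\partial s_1}(s_1, s_2) = H'(s_1)$ and $\frac{\partial G}{\partial s_2}(s_1, s_2) = -H'(s_2)$, with the analogous formulas for $\hG$. Evaluating the $s_1$-derivative at $(-\overline{\lambda_i^+}, -\overline{\lambda_i^-})$ and pre- and post-multiplying by $c_i\tran$ and $b_i$ turns~\eqref{eq:sobh3} into $c_i\tran H'\myparen*{-\overline{\lambda_i^+}} b_i = c_i\tran \hH'\myparen*{-\overline{\lambda_i^+}} b_i$, which is~\eqref{eq:soc3}; similarly the $s_2$-derivative produces $-c_i\tran H'\myparen*{-\overline{\lambda_i^-}} b_i = -c_i\tran \hH'\myparen*{-\overline{\lambda_i^-}} b_i$, equivalent to~\eqref{eq:soc4} after cancelling the sign. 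Each step is reversible, so the equivalences hold in both directions.

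There is no real obstacle here; the only thing to be mildly careful about is the interpretation of the partial derivatives of $G$ and $\hG$ as holomorphic derivatives in each slot (both $H$ and $\hH$ are analytic on $\CC_+$ near the evaluation points, since the $\lambda_i^\pm$ lie in $\CC_-$), so that $\frac{\partial G}{\partial s_1}$ and $\frac{\partial G}{\partial s_2}$ are well-defined and reduce to $H'(s_1)$ and $-H'(s_2)$ as claimed. With that noted, the proof is just the four substitutions above.
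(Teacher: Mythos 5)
Your proposal is correct and follows essentially the same route as the paper's proof: the Lagrange conditions are direct substitutions of the definition of $G$ and $\hG$, and the Hermite conditions follow from $\frac{\partial G}{\partial s_1}(s_1,s_2) = H'(s_1)$ and $\frac{\partial G}{\partial s_2}(s_1,s_2) = -H'(s_2)$ with the sign cancelling in the second case. Nothing is missing.
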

\begin{proof}
  The condition~\eqref{eq:sobh1} and~\eqref{eq:sobh2} directly follow from,
  respectively,~\eqref{eq:soc1} and~\eqref{eq:soc2}
  based on the definitions of $G$ and $\hG$ in~\eqref{eq:Gs1s2}.
  Also note that
  $\frac{\partial G}{\partial s_1}(s_1, s_2) = H'(s_1)$ and
  $\frac{\partial G}{\partial s_2}(s_1, s_2) = -H'(s_2)$; and
  similarly for $\hG$.
  These observations immediately reveal
  that~\eqref{eq:sobh3} and~\eqref{eq:sobh4} are equivalent to,
  respectively,~\eqref{eq:soc3} and~\eqref{eq:soc4}.
\end{proof}
Therefore, as in the unstructured case,
$\Htwo$-optimal reduced-order modeling of second-order systems
requires bitangential Hermite interpolation.
The optimal interpolation points are still the mirror images
of the reduced-order poles and
the tangential directions result from the residues of the \ac{rom}.
However, what needs to be interpolated is a modified 2D transfer function $G$.
In our earlier work~\cite{MliG23b}, we showed that
in addition to the classical $\Htwo$-optimal approximation problem,
the bitangential Hermite interpolation
formed the necessary conditions for optimality
for the rational nonlinear least-squares fitting of \ac{lti} systems and
for the $\Ltwo$-optimal approximation of stationary problems.
In both cases, the interpolation had to be enforced
on a modified mapping rather than the original one.
\Cref{cor:sobh} proves this to be the case
for structure-preserving $\Htwo$-optimal approximation
of second-order \ac{lti} systems as well.
Thus, bitangential Hermite interpolation remains
the unifying framework even for a larger class of problems.

\subsection{Numerical Example}

To demonstrate the results of \Cref{thm:so-interp},
we use the following second-order system as the \ac{fom}:
\begin{gather*}
  E = \mydiag*{\frac{3}{10}, \frac{2}{10}, 3, 2}, \
  K = \mydiag*{\frac{2}{100}, \frac{101}{100}, 2, 2}, \\
  B =
  \begin{bmatrix}
    1 & 1 & 1 \\
    1 & 2 & 2 \\
    1 & 3 & 1 \\
    1 & 4 & 2
  \end{bmatrix}\!, \
  C =
  \begin{bmatrix}
    1 & 1 & 1 & 1 \\
    1 & 2 & 3 & 4 \\
  \end{bmatrix}\!,
\end{gather*}
which was chosen to have two pairs (one real, one complex) of poles close to the
imaginary axis and two pairs that are further away:
\[
  \frac{1}{\myparen*{s + \frac{1}{10}} \myparen*{s + \frac{2}{10}}}, \
  \frac{1}{\myparen*{s + \frac{1}{10}}^2 + 1}, \
  \frac{1}{\myparen*{s + 1} \myparen*{s + 2}}, \
  \frac{1}{\myparen*{s + 1}^2 + 1}.
\]
We then find \iac{diagstrom} of order $\nrom = 2$ using the Nelder-Mead
method~\cite{NelM65} initialized with
\[
  \hE = I_{4, 2}\tran E I_{4, 2}, \
  \hK = I_{4, 2}\tran K I_{4, 2}, \
  \hB = I_{4, 2}\tran B, \
  \hC = C I_{4, 2},
\]
where $I_{4, 2} \in \RR^{4 \times 2}$ are the first two columns of $I_4$.
The relative $\Htwo$ error of the initial \ac{diagstrom} is $0.32393$
(rounded to $5$ significant digits).
In the Nelder-Mead minimization, the \ac{diagstrom} is parameterized using a
vector $x \in \RR^{14}$ as
\begin{gather*}
  \hE = \mydiag{x_1, x_2}, \
  \hK = \mydiag{x_3, x_4}, \
  \hB =
  \begin{bmatrix}
    x_5 & x_6 & x_7    \\
    x_8 & x_9 & x_{10}
  \end{bmatrix}\!, \
  \hC =
  \begin{bmatrix}
    x_{11} & x_{12} \\
    x_{13} & x_{14}
  \end{bmatrix}\!.
\end{gather*}
The resulting optimal model is a \ac{diagstrom} with
\begin{gather*}
  \hE = \mydiag{0.32897, 0.5025}, \
  \hK = \mydiag{0.021932, 1.0745}, \\
  \hB =
  \begin{bmatrix}
    0.46508 & 0.4814 & 0.46506 \\
    1.024   & 2.7071 & 1.919
  \end{bmatrix}\!, \
  \hC =
  \begin{bmatrix}
    2.3466 & 1.8504 \\
    2.416  & 4.7526
  \end{bmatrix}\!,
\end{gather*}
and relative $\Htwo$ error of $0.15692$.
Its poles are
\[
  \lambda_1^+ = -0.092907, \
  \lambda_1^- = -0.23607, \
  \lambda_2^{\pm} = -0.25125 \pm 1.0057 \imag.
\]
To check whether the resulting \ac{diagstrom} satisfies the structured
optimality conditions, we define the relative distance function as
$\mathrm{reldist}(x, y) = \frac{\norm{x - y}}{\norm{x}}$.
We find that
\begin{gather*}
  \mathrm{reldist}\myparen*{
    \myparen*{
      H\myparen*{-\overline{\lambda_1^+}}
      - H\myparen*{-\overline{\lambda_1^-}}
    }
    b_1,
    \myparen*{
      \hH\myparen*{-\overline{\lambda_1^+}}
      - \hH\myparen*{-\overline{\lambda_1^-}}
    }
    b_1
  }
  \approx 1.3 \times 10^{-8}, \\
  \mathrm{reldist}\myparen*{
    \myparen*{
      H\myparen*{-\overline{\lambda_2^+}}
      - H\myparen*{-\overline{\lambda_2^-}}
    }
    b_2,
    \myparen*{
      \hH\myparen*{-\overline{\lambda_2^+}}
      - \hH\myparen*{-\overline{\lambda_2^-}}
    }
    b_2
  }
  \approx 4.2 \times 10^{-8}, \\
  \mathrm{reldist}\myparen*{
    c_1\tran
    \myparen*{
      H\myparen*{-\overline{\lambda_1^+}}
      - H\myparen*{-\overline{\lambda_1^-}}
    },
    c_1\tran
    \myparen*{
      \hH\myparen*{-\overline{\lambda_1^+}}
      - \hH\myparen*{-\overline{\lambda_1^-}}
    }
  }
  \approx 1.1 \times 10^{-8}, \\
  \mathrm{reldist}\myparen*{
    c_2\tran
    \myparen*{
      H\myparen*{-\overline{\lambda_2^+}}
      - H\myparen*{-\overline{\lambda_2^-}}
    },
    c_2\tran
    \myparen*{
      \hH\myparen*{-\overline{\lambda_2^+}}
      - \hH\myparen*{-\overline{\lambda_2^-}}
    }
  }
  \approx 3.6 \times 10^{-8}, \\
  \mathrm{reldist}\myparen*{
    c_1\tran
    H'\myparen*{-\overline{\lambda_1^+}}
    b_1,
    c_1\tran
    \hH'\myparen*{-\overline{\lambda_1^+}}
    b_1
  }
  \approx 1.4 \times 10^{-8}, \\
  \mathrm{reldist}\myparen*{
    c_1\tran
    H'\myparen*{-\overline{\lambda_1^-}}
    b_1,
    c_1\tran
    \hH'\myparen*{-\overline{\lambda_1^-}}
    b_1
  }
  \approx 4.6 \times 10^{-9}, \\
  \mathrm{reldist}\myparen*{
    c_2\tran
    H'\myparen*{-\overline{\lambda_2^{\pm}}}
    b_2,
    c_2\tran
    \hH'\myparen*{-\overline{\lambda_2^{\pm}}}
    b_2
  }
  \approx 4.6 \times 10^{-8},
\end{gather*}
which shows that the optimality conditions of \cref{thm:so-interp} are satisfied
(within the numerical accuracy of the optimization algorithm).
We also check the unstructured optimality conditions and find that
\begin{gather*}
  \mathrm{reldist}\myparen*{
    H\myparen*{-\overline{\lambda_1^+}}
    b_1,
    \hH\myparen*{-\overline{\lambda_1^+}}
    b_1
  }
  \approx 2.5 \times 10^{-2}, \\
  \mathrm{reldist}\myparen*{
    H\myparen*{-\overline{\lambda_1^-}}
    b_1,
    \hH\myparen*{-\overline{\lambda_1^-}}
    b_1
  }
  \approx 4.7 \times 10^{-2}, \\
  \mathrm{reldist}\myparen*{
    H\myparen*{-\overline{\lambda_2^{\pm}}}
    b_2,
    \hH\myparen*{-\overline{\lambda_2^{\pm}}}
    b_2
  }
  \approx 1.0 \times 10^{-1}, \\
  \mathrm{reldist}\myparen*{
    c_1\tran
    H\myparen*{-\overline{\lambda_1^+}},
    c_1\tran
    \hH\myparen*{-\overline{\lambda_1^+}}
  }
  \approx 2.7 \times 10^{-2}, \\
  \mathrm{reldist}\myparen*{
    c_1\tran
    H\myparen*{-\overline{\lambda_1^-}},
    c_1\tran
    \hH\myparen*{-\overline{\lambda_1^-}}
  }
  \approx 5.1 \times 10^{-2}, \\
  \mathrm{reldist}\myparen*{
    c_2\tran
    H\myparen*{-\overline{\lambda_2^{\pm}}},
    c_2\tran
    \hH\myparen*{-\overline{\lambda_2^{\pm}}}
  }
  \approx 1.1 \times 10^{-1},
\end{gather*}
which demonstrates that the \ac{diagstrom} does not satisfy the unstructured
interpolatory necessary optimality conditions~\eqref{eq:lti-oc}.
The code for this numerical example is available at~\cite{Mli24}.

\section{Port-Hamiltonian Systems}%
\label{sec:ph}

\Ac{lti} \ac{ph} systems naturally arise in modeling a wide range of
physical, engineering, and biological systems.
They generalize the classical Hamiltonian structure
by including inputs and outputs,
thus allowing interaction with the environment via their \emph{ports}.
The \ac{ph} systems are important in energy-based modeling
as their form ensures passivity and allows energy-preserving interconnections.
We refer the reader to, e.g.,~\cite{VanJ14,MehU22,BirZ12},
for more details on \ac{ph} systems.

Thus, given an \ac{lti} system $H$, in this section we consider \acp{strom}
that have the \ac{lti} \ac{ph} form
\begin{subequations}\label{eq:ph-rom}
  \begin{align}
    \dot{\hx}(t)
     & =
    \myparen*{\hJ - \hR} \hx(t)
    + \hB u(t), \\*
    \hy(t)
     & =
    \hB\tran \hx(t),
  \end{align}
\end{subequations}
where
$\hJ, \hR \in \RRrr$,
$\hB \in \RRri$,
$\hJ\tran = -\hJ$, and
$\hR = \hR\tran \succcurlyeq 0$%
\footnote{There are slightly more general forms of the \ac{ph} systems.
  For the conciseness of the presentation,
  we focus on the form~\eqref{eq:ph-rom}.}.
Additionally, we assume $\hJ - \hR$ is asymptotically stable,
which is guaranteed when $\hR \succ 0$.

Given an \ac{lti} system $H$,
our goal is to find \iac{ph} \ac{rom} as in~\eqref{eq:ph-rom}
that minimizes the $\Htwo$ error~\eqref{eq:h2-error}.
Several approaches have been proposed for this problem.
In~\cite{GugPB+12},
the authors propose an iterative algorithm similar to \ac{irka},
called IRKA-PH,
which finds \iac{rom} that, upon convergence,
satisfies one, namely~\eqref{eq:lti-oc1}, out of the three necessary conditions
for \emph{unstructured} first-order systems~\eqref{eq:lti-oc}.
However, it is important to point out that the condition~\eqref{eq:lti-oc1}
that IRKA-PH satisfies does not correspond to true optimality conditions for
\iac{ph} system;
it simply works with the conditions for the unstructured case.
Deriving the true structured optimality conditions for \ac{ph} systems
is the main goal of this section.
Exploiting the minimal solution of an algebraic Riccati equation
related to \ac{ph} systems,
Breiten and Unger~\cite{BreU22} significantly improved the performance of
IRKA-PH\@.
Optimal-$\Htwo$ model reduction with \ac{ph} structure was also considered,
e.g., in~\cite{MosL20,SMMV22,MSMV22} where gradient-based optimization with
structure preservation is used to construct the \acp{rom}.

\subsection{Interpolatory Conditions}

These aforementioned methods provide high-quality \acp{rom} with \ac{ph}
structure,
yet they do not derive or work with interpolatory optimality conditions,
which is our main focus here.
Interpolatory $\Htwo$ conditions with \ac{ph} structure were first studied
in~\cite{BeaB14} where the authors derived (a subset of) the
necessary conditions for $\Htwo$-optimality,
shown in~\eqref{eq:BeaB14-pH} below.
We first show in \Cref{thm:BeaB14-pH-interp} that these conditions follow from
our general framework, \Cref{cor:cond}, directly.
But more importantly what this derivation will reveal is
that the interpolatory conditions in~\cite{BeaB14} are not complete
in the sense that they correspond to only~\eqref{eq:cond-A} in \Cref{cor:cond}
(corresponding to the gradient with respect to $\hJ - \hR$)
without the other necessary conditions.
Then based on this observation, in \Cref{thm:pH-interp-tangential},
we provide the remaining (realization-dependent) conditions for
$\Htwo$-optimal model reduction with \ac{ph} structure.
With an additional structural assumption,
in \Cref{thm:pH-interp},
we provide a realization-independent set of interpolatory optimality conditions.
\begin{theorem}\label{thm:BeaB14-pH-interp}
  Let $H \in \Htwo^{\nin \times \nin}$ and
  $\Sigma_{\textnormal{pH}}$ be the set of \ac{ph} \acp{strom}
  $(\hJ, \hR, \hB)$ in~\eqref{eq:ph-rom} such that
  $\hJ, \hR \in \RRrr$,
  $\hJ\tran = -\hJ$,
  $\hR = \hR\tran \succcurlyeq 0$,
  $\hJ - \hR$ has all of its eigenvalues in $\CC_-$, and
  $\hB \in \RRri$.
  Let $(\hJ, \hR, \hB)$ be an $\Htwo$-optimal \ac{ph} \ac{strom} for $H$ in
  $\Sigma_{\textnormal{pH}}$.
  Suppose that $\hR \succ 0$ and that the transfer function $\hH$ of
  $(\hJ, \hR, \hB)$ has $\nrom$ distinct poles and is represented as
  $\hH(s) = \sum_{i = 1}^{\nrom} \frac{c_i b_i\herm}{s - \lambda_i}$.
  Then
  \begin{subequations}\label{eq:BeaB14-pH}
    \begin{align}
      \label{eq:BeaB14-pH-1}
      c_i\herm
      \myparen*{
        H\myparen*{-\overline{\lambda_i}}
        - H\myparen*{-\overline{\lambda_j}}
      }
      b_j
       & =
      c_i\herm
      \myparen*{
        \hH\myparen*{-\overline{\lambda_i}}
        - \hH\myparen*{-\overline{\lambda_j}}
      }
      b_j, \\
      \label{eq:BeaB14-pH-2}
      c_i\herm
      H'\myparen*{-\overline{\lambda_i}}
      b_i
       & =
      c_i\herm
      \hH'\myparen*{-\overline{\lambda_i}}
      b_i,
    \end{align}
  \end{subequations}
  for $i, j = 1, 2, \dots, \nrom$.
\end{theorem}
\begin{proof}
  The matrix $\hJ - \hR$ satisfies
  $(\hJ - \hR) + (\hJ - \hR)\tran = -2 \hR \prec 0$.
  Conversely, any matrix $\hA$ such that $\hA + \hA\tran \prec 0$
  can be decomposed as $\hA = \hJ - \hR$
  where $\hJ$ is skew-symmetric and
  $\hR$ is symmetric positive definite.
  Since the set of matrices
  $S = \{\hA \in \RRrr : \hA + \hA\tran \prec 0\}$ is open in $\RRrr$ and
  $\hJ - \hR \in S$,
  the condition~\eqref{eq:cond-A} corresponding to the gradient
  with respect to $\hJ - \hR$ holds, i.e.,
  \begin{equation} \label{eq:equ1}
    \int_{-\infty}^{\infty}
    \myparen*{\imag \omega \hI - \hJ + \hR}\mherm
    \hB
    \myparen*{
      H(\imag \omega)
      - \hH(\imag \omega)
    }
    \hB\tran
    \myparen*{\imag \omega \hI - \hJ + \hR}\mherm
    \dif{\omega}
    =
    0.
  \end{equation}
  Let $\hT \in \CCrr$ be an invertible matrix such that
  $\hT^{-1} (\hJ - \hR) \hT = \Lambda$.
  Then, after multiplying from the left by $\hT\herm$ and
  from the right by $\hT\mherm$,~\eqref{eq:equ1} becomes
  \begin{equation} \label{eq:equ2}
    \int_{-\infty}^{\infty}
    \myparen*{\imag \omega \hI - \Lambda}\mherm
    \hT\herm
    \hB
    \myparen*{
      H(\imag \omega)
      - \hH(\imag \omega)
    }
    \hB\tran
    \hT\mherm
    \myparen*{\imag \omega \hI - \Lambda}\mherm
    \dif{\omega}
    =
    0.
  \end{equation}
  Note that $b_j = \hB\tran \hT\mherm e_j$ and
  $c_i\herm = e_i\tran \hT\herm \hB$.
  Multiplying~\eqref{eq:equ2} from the left by $e_i\tran$ and
  from the right by $e_j$ gives
  \begin{equation*}
    \int_{-\infty}^{\infty}
    \frac{
      c_i\herm
      \myparen*{
        H(\imag \omega)
        - \hH(\imag \omega)
      }
      b_j
    }{
      \myparen*{-\imag \omega - \overline{\lambda_i}}
      \myparen*{-\imag \omega - \overline{\lambda_j}}
    }
    \dif{\omega}
    =
    0.
  \end{equation*}
  Substituting $s = \imag \omega$ in this last equality yields
  \begin{equation}\label{eq:equ3}
    \oint_{\imag \RR}
    \frac{
      c_i\herm
      \myparen*{
        H(s)
        - \hH(s)
      }
      b_j
    }{
      \myparen*{s + \overline{\lambda_i}}
      \myparen*{s + \overline{\lambda_j}}
    }
    \dif{s}
    =
    0.
  \end{equation}
  When $i \neq j$,~\eqref{eq:equ3} immediately leads to the conditions
  in~\eqref{eq:BeaB14-pH-1}.
  And finally, for $i = j$, we obtain~\eqref{eq:BeaB14-pH-2}.
\end{proof}
Thus, we are able to recover the interpolatory conditions from~\cite{BeaB14} as
a special case of our general framework.
Additionally, and more importantly,
the proof reveals that in the derivation of these conditions
only the information in $\hJ - \hR$ is used,
but not in $\hB$.
Thus, these conditions do not provide the full set of interpolatory conditions
for optimality.
To illustrate this, for simplicity, assume we have a SISO \ac{lti} system.
Even though~\eqref{eq:BeaB14-pH} seems to correspond to $\nrom^2 + \nrom$
conditions,
that is not the case.
If the interpolatory conditions
\[
  H\myparen*{-\overline{\lambda_i}}
  - H\myparen*{-\overline{\lambda_j}}
  =
  \hH\myparen*{-\overline{\lambda_i}}
  - \hH\myparen*{-\overline{\lambda_j}}
\]
and
\[
  H\myparen*{-\overline{\lambda_j}}
  - H\myparen*{-\overline{\lambda_k}}
  =
  \hH\myparen*{-\overline{\lambda_j}}
  - \hH\myparen*{-\overline{\lambda_k}}
\]
hold, then by adding them, we automatically obtain
\[
  H\myparen*{-\overline{\lambda_i}}
  - H\myparen*{-\overline{\lambda_k}}
  =
  \hH\myparen*{-\overline{\lambda_i}}
  - \hH\myparen*{-\overline{\lambda_k}}.
\]
Then, for the SISO case, one has a total of $2 \nrom - 1$ conditions,
which is not enough to uniquely specify a rational function of order~$\nrom$.
The following theorem provides the remaining conditions.
\begin{theorem}\label{thm:pH-interp-tangential}
  Let the assumptions in \Cref{thm:BeaB14-pH-interp} hold.
  Furthermore, let $\hT$ be an eigenvector matrix of $\hJ - \hR$ and
  denote
  $\hT e_i = t_i$,
  $\hT\mherm e_i = s_i$,
  $\hB\tran t_i = c_i$, and,
  $\hB\tran s_i = b_i$.
  Then, in addition to~\eqref{eq:BeaB14-pH}, $\hH$ also satisfies
  \begin{equation}\label{eq:pH-interp-tangential}
    \sum_{i = 1}^{\nrom}
    \myparen*{
      H\myparen*{-\overline{\lambda_i}}
      b_i
      t_i\herm
      +
      H\myparen*{-\overline{\lambda_i}}\herm
      c_i
      s_i\herm
    }
    =
    \sum_{i = 1}^{\nrom}
    \myparen*{
      \hH\myparen*{-\overline{\lambda_i}}
      b_i
      t_i\herm
      +
      \hH\myparen*{-\overline{\lambda_i}}\herm
      c_i
      s_i\herm
    }.
  \end{equation}
\end{theorem}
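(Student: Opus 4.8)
The plan is to obtain the missing conditions from stationarity of $\obj$ with respect to $\hB$, the essential point being that in the \ac{ph} form~\eqref{eq:ph-rom} the \emph{same} matrix $\hB$ occurs both as the input map $\cBr = \hB$ and, transposed, as the output map $\cCr = \hB\tran$. First I would record the pole--residue data: since the $\nrom$ poles are distinct, $\hJ - \hR$ is diagonalizable, so with $\hT^{-1}\myparen*{\hJ - \hR}\hT = \mydiag{\lambda_i}$ we get $\hH(s) = \hB\tran\myparen*{sI - \hJ + \hR}^{-1}\hB = \sum_i c_i b_i\herm/(s - \lambda_i)$, where $c_i = \hB\tran t_i$ and $b_i = \hB\tran s_i$ (so that $b_i\herm = s_i\herm\hB = e_i\tran\hT^{-1}\hB$, using $\myparen*{\hT\mherm}\herm = \hT^{-1}$), which reproduces the residues appearing in \Cref{thm:BeaB14-pH-interp}. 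With $\hJ - \hR$ held fixed, the admissible variations are $\hB \mapsto \hB + \Delta$ with $\Delta \in \RRri$, perturbing $\cBr$ by $\Delta$ and $\cCr$ by $\Delta\tran$; applying the chain rule to the gradient formulas~\eqref{eq:grad-B}--\eqref{eq:grad-C} of \Cref{thm:grad} (with $\cbr_1 \equiv \ccr_1 \equiv 1$, $\pset = \imag\RR$, and \Cref{lem:diag} to restrict to real $\hB$) then forces
\begin{equation*}
  \int_{-\infty}^{\infty} \xrd(\imag\omega)\myparen*{\hH - H}(\imag\omega)\dif{\omega}
  + \myparen*{
    \int_{-\infty}^{\infty} \myparen*{\hH - H}(\imag\omega)\,\xr(\imag\omega)\herm\dif{\omega}
  }\tran = 0 ,
\end{equation*}
with $\xr(s) = \myparen*{sI - \hJ + \hR}^{-1}\hB$ and $\xrd(s) = \myparen*{sI - \hJ + \hR}\mherm\hB$ (using $\hC\herm = \hB$). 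The conceptual hurdle --- and the reason the conditions recovered in \Cref{thm:BeaB14-pH-interp}, which come only from the gradient in $\hJ - \hR$, are incomplete --- is precisely that neither integral vanishes by itself; only this transposed sum does.

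Next I would run a residue computation in the spirit of the proofs of \Cref{thm:cond-diag-s,thm:BeaB14-pH-interp}. On the imaginary axis the eigendecomposition yields $\xr(\imag\omega)\herm = -\sum_i b_i t_i\herm/\myparen*{\imag\omega + \overline{\lambda_i}}$ and $\xrd(\imag\omega) = -\sum_i s_i c_i\herm/\myparen*{\imag\omega + \overline{\lambda_i}}$, using $e_i\tran\hT\herm\hB = t_i\herm\hB = c_i\herm$, $\hB\tran\hT\mherm e_i = b_i$, $\hJ\tran = -\hJ$, $\hR\tran = \hR$, and that $\myparen*{\hJ - \hR}\herm$ is diagonalized by $\hT\mherm$ with eigenvalues $\overline{\lambda_i}$. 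Rewriting both integrals as contour integrals over $\imag\RR$, closing in $\CC_+$ where $H$ and $\hH$ are analytic (the closing arc vanishing by finiteness of the $\Htwo$ norms), and invoking the Residue Theorem together with the simple-pole case of \Cref{lem:res} gives
\begin{equation*}
  \sum_{i = 1}^{\nrom} s_i c_i\herm\myparen*{\hH - H}\myparen*{-\overline{\lambda_i}}
  + \myparen*{
    \sum_{i = 1}^{\nrom} \myparen*{\hH - H}\myparen*{-\overline{\lambda_i}}\,b_i t_i\herm
  }\tran = 0 .
\end{equation*}

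The final step converts this into the symmetric form~\eqref{eq:pH-interp-tangential}. Because $H$ and $\hH$ have real state-space realizations, the integrands above are conjugate-symmetric under $\omega \mapsto -\omega$, so both integrals --- hence both matrices $\sum_i s_i c_i\herm\myparen*{\hH - H}\myparen*{-\overline{\lambda_i}}$ and $\sum_i \myparen*{\hH - H}\myparen*{-\overline{\lambda_i}}b_i t_i\herm$ --- are real. Transposing the last display and using that a real matrix equals its conjugate transpose, so that $\myparen*{\sum_i s_i c_i\herm\myparen*{\hH - H}\myparen*{-\overline{\lambda_i}}}\tran = \sum_i \myparen*{\hH - H}\myparen*{-\overline{\lambda_i}}\herm c_i s_i\herm$, yields $\sum_i \mybrack*{\myparen*{\hH - H}\myparen*{-\overline{\lambda_i}}b_i t_i\herm + \myparen*{\hH - H}\myparen*{-\overline{\lambda_i}}\herm c_i s_i\herm} = 0$, which is exactly~\eqref{eq:pH-interp-tangential}. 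I expect the main obstacle to be this transpose/conjugate bookkeeping --- together with the conceptual move in the first step of realizing that the correct stationarity condition couples the $\hB$- and $\hC$-slots --- since that is where the clean Hermitian-symmetric statement actually originates.
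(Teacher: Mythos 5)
Your proposal is correct and follows essentially the same route as the paper: both differentiate $\obj$ with respect to the single matrix $\hB$ occupying both the input and output slots, diagonalize via $\hT$, evaluate the resulting integrals by residues at $-\overline{\lambda_i}$, and then symmetrize. The only cosmetic differences are that the paper pushes the transpose inside the second integral before applying the residue theorem and finishes with a conjugate transpose plus realness of $H,\hH$, whereas you keep the outer transpose and argue realness of the two sums first (and your appeal to \Cref{lem:diag} for the restriction to real $\hB$ is unnecessary, since the gradient is real, as you yourself then verify).
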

\begin{proof}
  The gradient with respect to $\hB$ follows from \Cref{thm:grad}.
  In particular, since $\hB$ in the \ac{ph} system~\eqref{eq:ph-rom}
  appears in both $\Br$ and $\Cr$,
  the gradient of the squared $\Htwo$ error with respect to $\hB$ is
  \begin{align*}
    \nabla_{\overline{\hB}} \obj
     & =
    \int_{-\infty}^{\infty}
    \myparen*{\imag \omega \hI - \hJ + \hR}\mherm
    \hB
    \myparen*{
      \hH(\imag \omega)
      - H(\imag \omega)
    }
    \dif{\omega} \\*
     & \quad
    +
    \myparen*{
      \int_{-\infty}^{\infty}
      \myparen*{
        \hH(\imag \omega)
        - H(\imag \omega)
      }
      \hB\tran
      \myparen*{\imag \omega \hI - \hJ + \hR}\mherm
      \dif{\omega}
    }\herm       \\
     & =
    \int_{-\infty}^{\infty}
    \myparen*{\imag \omega \hI - \hJ + \hR}\mherm
    \hB
    \myparen*{
      \hH(\imag \omega)
      - H(\imag \omega)
    }
    \dif{\omega} \\*
     & \quad
    +
    \int_{-\infty}^{\infty}
    \myparen*{\imag \omega \hI - \hJ + \hR}^{-1}
    \hB
    \myparen*{
      \hH(\imag \omega)\herm
      - H(\imag \omega)\herm
    }
    \dif{\omega}.
  \end{align*}
  Using $\hT^{-1} (\hJ - \hR) \hT = \Lambda$, the gradient becomes
  \begin{align*}
    \nabla_{\overline{\hB}} \obj
     & =
    \hT\mherm
    \int_{-\infty}^{\infty}
    \myparen*{\imag \omega \hI - \Lambda}\mherm
    \hT\herm
    \hB
    \myparen*{
      \hH(\imag \omega)
      - H(\imag \omega)
    }
    \dif{\omega} \\*
     & \quad
    +
    \hT
    \int_{-\infty}^{\infty}
    \myparen*{\imag \omega \hI - \Lambda}^{-1}
    \hT^{-1}
    \hB
    \myparen*{
      \hH(\imag \omega)\herm
      - H(\imag \omega)\herm
    }
    \dif{\omega}.
  \end{align*}
  Using $\hI = \sum_{i = 1}^{\nrom} e_i e_i\tran$, we find
  \begin{align*}
    \nabla_{\overline{\hB}} \obj
     & =
    \hT\mherm
    \sum_{i = 1}^{\nrom} e_i e_i\tran
    \int_{-\infty}^{\infty}
    \myparen*{\imag \omega \hI - \Lambda}\mherm
    \hT\herm
    \hB
    \myparen*{
      \hH(\imag \omega)
      - H(\imag \omega)
    }
    \dif{\omega} \\*
     & \quad
    +
    \hT
    \sum_{i = 1}^{\nrom} e_i e_i\tran
    \int_{-\infty}^{\infty}
    \myparen*{\imag \omega \hI - \Lambda}^{-1}
    \hT^{-1}
    \hB
    \myparen*{
      \hH(\imag \omega)\herm
      - H(\imag \omega)\herm
    }
    \dif{\omega},
  \end{align*}
  which simplifies to
  \begin{align*}
    \nabla_{\overline{\hB}} \obj
     & =
    \sum_{i = 1}^{\nrom}
    s_i
    c_i\herm
    \int_{-\infty}^{\infty}
    \frac{
      \hH(\imag \omega)
      - H(\imag \omega)
    }{
      -\imag \omega - \overline{\lambda_i}
    }
    \dif{\omega}
    +
    \sum_{i = 1}^{\nrom}
    t_i
    b_i\herm
    \overline{
      \int_{-\infty}^{\infty}
      \frac{
        \hH(\imag \omega)\tran
        - H(\imag \omega)\tran
      }{
        -\imag \omega - \overline{\lambda_i}
      }
      \dif{\omega}
    }.
  \end{align*}
  Using the substitution $s = \imag \omega$ and the Cauchy integral formula,
  and equating the gradient to zero,
  we obtain
  \begin{align*}
    0
     & =
    \sum_{i = 1}^{\nrom}
    s_i
    c_i\herm
    \myparen*{
      \hH\myparen*{-\overline{\lambda_i}}
      - H\myparen*{-\overline{\lambda_i}}
    }
    +
    \sum_{i = 1}^{\nrom}
    t_i
    b_i\herm
    \myparen*{
      \hH\myparen*{-\overline{\lambda_i}}\herm
      - H\myparen*{-\overline{\lambda_i}}\herm
    }.
  \end{align*}
  Applying conjugate transpose gives the tangential interpolatory
  condition~\eqref{eq:pH-interp-tangential}.
\end{proof}
Now, the optimality conditions~\eqref{eq:BeaB14-pH} in
\Cref{thm:BeaB14-pH-interp} together with the new optimality
condition~\eqref{eq:pH-interp-tangential} in \Cref{thm:pH-interp-tangential}
describe \emph{the full set of interpolatory conditions} for $\Htwo$-optimal
approximation of \ac{ph} systems.

However, note the major difference in this new additional
condition~\eqref{eq:pH-interp-tangential}.
In the earlier condition~\eqref{eq:BeaB14-pH}
(and the other conditions derived in the earlier sections),
interpolation is realization-independent, i.e.,
it does not depend on a specific state-space form and
only uses realization-independent quantities such as poles and residues.
However, the new condition~\eqref{eq:pH-interp-tangential} is
realization-dependent due to its dependence on the eigenvectors of $\hJ - \hR$.

To remove this dependence on eigenvectors,
we require an additional assumption,
namely the normality of $\hJ - \hR$.
Then there exists a unitary matrix $\hT \in \CCrr$ such that
$\hT\herm (\hJ - \hR) \hT$ is diagonal.
Additionally, $\hT\herm \hJ \hT$ and $\hT\herm \hR \hT$ are both diagonal as
they are the skew-Hermitian and negative Hermitian parts of
$\hT\herm (\hJ - \hR) \hT$.
Thus, $(\hT\herm \hJ \hT, \hT\herm \hR \hT, \hT\herm \hB)$ is also \iac{ph}
system, in particular, \iac{ph} \ac{diagstrom}.
We then obtain a full set of realization-independent interpolatory optimality
conditions.
\begin{theorem}\label{thm:pH-interp}
  Let $H \in \Htwo^{\nin \times \nin}$ and
  $\Sigma_{\textnormal{D-pH}}$ be the set of \ac{ph} \acp{diagstrom}
  $(\hJ, \hR, \hB)$ in~\eqref{eq:ph-rom} such that
  $\hJ, \hR \in \CCrr$ are diagonal,
  $\hJ = -\hJ\herm$,
  $\hR = \hR\herm \succ 0$, and
  $\hB \in \CCri$.
  Let $(\hJ, \hR, \hB)$ be an $\Htwo$-optimal \ac{ph} \ac{diagstrom} for $H$ in
  $\Sigma_{\textnormal{D-pH}}$.
  Furthermore, let the transfer function $\hH$ of $(\hJ, \hR, \hB)$ have $\nrom$
  distinct poles.
  Then, with $\hJ - \hR = \mydiag{\lambda_i}$ and $b_i\herm = e_i\tran \hB$,
  \begin{equation}\label{eq:pH-pole-res-normal}
    \hH(s) = \sum_{i = 1}^{\nrom} \frac{b_i b_i\herm}{s - \lambda_i}
  \end{equation}
  and it satisfies the interpolatory optimality conditions
  \begin{subequations}\label{eq:new-ph}
    \begin{align}
      \label{eq:new-ph-1}
      \myparen*{
        H\myparen*{-\overline{\lambda_i}}
        + H\myparen*{-\overline{\lambda_i}}\herm
      }
      b_i
       & =
      \myparen*{
        \hH\myparen*{-\overline{\lambda_i}}
        + \hH\myparen*{-\overline{\lambda_i}}\herm
      }
      b_i, \\
      \label{eq:new-ph-2}
      b_i\herm
      H'\myparen*{-\overline{\lambda_i}}
      b_i
       & =
      b_i\herm
      \hH'\myparen*{-\overline{\lambda_i}}
      b_i,
    \end{align}
  \end{subequations}
  for $i = 1, 2, \dots, \nrom$.
\end{theorem}
\begin{proof}
  The pole-residue form~\eqref{eq:pH-pole-res-normal} follows directly.
  The tangential Lagrange condition~\eqref{eq:new-ph-1} follows
  from~\eqref{eq:pH-interp-tangential},
  noting that $c_i = b_i$ and $t_i = s_i = e_i$ since $\hJ - \hR$ is diagonal.
  The bitangential Hermite condition~\eqref{eq:new-ph-2} follows
  from~\eqref{eq:BeaB14-pH-2} in \Cref{thm:BeaB14-pH-interp} using
  \Cref{lem:diag} and the fact that $\hJ - \hR$ is diagonal.
\end{proof}
Therefore, using our general framework for $\Ltwo$-optimality and
with the additional assumption of normality,
we obtain more familiar bitangential Hermite interpolation conditions for
\ac{ph} systems,
better mimicking the unstructured case.
Moreover, we obtain a full set of conditions.
Similar to second-order systems,
we can reformulate these conditions as true bitangential Hermite conditions
for a modified transfer function as we show next.
\begin{corollary}
  Let the assumptions in \Cref{thm:pH-interp} hold.
  Define the transfer functions $G(s) = H(s) + H(s)\herm$ and
  $\hG(s) = \hH(s) + \hH(s)\herm$.
  Then
  \begin{subequations}\label{eq:phG}
    \begin{align}
      \label{eq:phG1}
      G\myparen*{-\overline{\lambda_i}}
      b_i
       & =
      \hG\myparen*{-\overline{\lambda_i}}
      b_i,                                 \\
      \label{eq:phG2}
      b_i\herm
      G\myparen*{-\overline{\lambda_i}}
       & =
      b_i\herm
      \hG\myparen*{-\overline{\lambda_i}}, \\
      \label{eq:phG3}
      b_i\herm
      \frac{\partial G}{\partial s}\myparen*{-\overline{\lambda_i}}
      b_i
       & =
      b_i\herm
      \frac{\partial \hG}{\partial s}\myparen*{-\overline{\lambda_i}}
      b_i,
    \end{align}
  \end{subequations}
  for $i = 1, 2, \dots, \nrom$.
\end{corollary}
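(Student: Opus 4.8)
The plan is to do for the port-Hamiltonian setting exactly what \Cref{cor:sobh} did for second-order systems: rewrite the optimality conditions~\eqref{eq:new-ph-1}--\eqref{eq:new-ph-2} of \Cref{thm:pH-interp} in terms of the modified transfer functions $G$ and $\hG$. Since $G$ and $\hG$ here are functions of a single complex variable, this amounts to bookkeeping once one fixes how the summand $H(s)\herm$ (the conjugate transpose of the evaluated matrix $H(s)$, not a para-conjugate system) behaves under evaluation at $-\overline{\lambda_i}$ and under the Wirtinger derivative $\partial/\partial s$.

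First I would record the two elementary facts. (i) By definition, $G\myparen*{-\overline{\lambda_i}} = H\myparen*{-\overline{\lambda_i}} + H\myparen*{-\overline{\lambda_i}}\herm$, and likewise for $\hG$; applying these matrices to $b_i$ shows that~\eqref{eq:phG1} \emph{is}~\eqref{eq:new-ph-1}, while~\eqref{eq:phG2} is the conjugate transpose of~\eqref{eq:new-ph-1} (here one uses that $H(s) + H(s)\herm$, and similarly $\hH(s) + \hH(s)\herm$, is Hermitian for each fixed $s$, so $G\myparen*{-\overline{\lambda_i}}\herm = G\myparen*{-\overline{\lambda_i}}$ and $\hG\myparen*{-\overline{\lambda_i}}\herm = \hG\myparen*{-\overline{\lambda_i}}$). (ii) Writing $H(s)\herm = \overline{H(s)}\tran$ and treating $s$ and $\overline{s}$ as independent, the map $s \mapsto \overline{H(s)}$ is anti-holomorphic (it depends on $\overline{s}$ only), so $\partial/\partial s\,[H(s)\herm] = 0$; therefore $\frac{\partial G}{\partial s}(s) = H'(s)$ and $\frac{\partial \hG}{\partial s}(s) = \hH'(s)$.

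With (ii) in hand, substituting $\frac{\partial G}{\partial s}\myparen*{-\overline{\lambda_i}} = H'\myparen*{-\overline{\lambda_i}}$ and $\frac{\partial \hG}{\partial s}\myparen*{-\overline{\lambda_i}} = \hH'\myparen*{-\overline{\lambda_i}}$ into~\eqref{eq:phG3} reduces it verbatim to the bitangential Hermite condition~\eqref{eq:new-ph-2}, so all three equivalences follow. I do not expect a genuine obstacle; the only point that needs care is fact (ii) — making precise that the $H(s)\herm$ term contributes to the value of $G$ but not to its $s$-derivative — which is the same phenomenon that gave $\partial G/\partial s_1 = H'(s_1)$ in \Cref{cor:sobh}. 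As with second-order systems and the examples in~\cite{MliG23b}, the conclusion is that $\Htwo$-optimality again forces bitangential Hermite interpolation, now of the Hermitian-valued transfer function $G(s) = H(s) + H(s)\herm$, at the mirrored reduced poles $-\overline{\lambda_i}$ along the residue directions $b_i$.
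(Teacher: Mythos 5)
Your proposal is correct and follows essentially the same route as the paper's (much terser) proof: \eqref{eq:phG1} is \eqref{eq:new-ph-1} by definition, \eqref{eq:phG2} follows by the Hermitian symmetry of $G(s)$ and $\hG(s)$, and \eqref{eq:phG3} reduces to \eqref{eq:new-ph-2} via the Wirtinger identity $\frac{\partial}{\partial s}\mybrack*{H(s)\herm} = 0$. Your fact (ii) is exactly the content behind the paper's phrase ``follows directly by using Wirtinger calculus,'' so there is nothing to add.
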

\begin{proof}
  The right tangential Lagrange conditions~\eqref{eq:phG1} follows directly
  from \Cref{thm:pH-interp} and from the definitions of $G$ and $\hG$.
  Due to symmetry, we also get the left tangential Lagrange
  conditions~\eqref{eq:phG2}.
  The bitangential Hermite condition~\eqref{eq:phG3} also follows directly by
  using Wirtinger calculus.
\end{proof}
Thus once again, bitangential Hermite interpolation
(of a modified transfer function)
appears as the necessary condition for $\Htwo$-optimality.

\subsection{Numerical Example}

To demonstrate the results of \Cref{thm:pH-interp},
we choose the \ac{fom} as
\begin{gather*}
  J =
  \begin{bmatrix}
    0 & 0  & 0 & 0 & 0  & 0 \\
    0 & 0  & 1 & 0 & 0  & 0 \\
    0 & -1 & 0 & 0 & 0  & 0 \\
    0 & 0  & 0 & 0 & 0  & 0 \\
    0 & 0  & 0 & 0 & 0  & 1 \\
    0 & 0  & 0 & 0 & -1 & 0
  \end{bmatrix}\!, \
  R =
  \mydiag*{\frac{1}{10}, \frac{1}{10}, \frac{1}{10}, 1, 1, 1}, \
  B =
  \begin{bmatrix}
    1 & 1 \\
    1 & 2 \\
    1 & 3 \\
    1 & 4 \\
    1 & 5 \\
    1 & 6
  \end{bmatrix}\!,
\end{gather*}
which has three (one real, one complex conjugate pair) poles
close to the imaginary axis and three poles that are further away:
\[
  -\frac{1}{10}, \
  -\frac{1}{10} \pm \imag, \
  -1, \
  -1 \pm \imag.
\]
We then find \iac{diagstrom} of order $\nrom = 3$ using Nelder-Mead initialized
with
\[
  \hJ = I_{6, 3}\tran J I_{6, 3}, \
  \hR = I_{6, 3}\tran R I_{6, 3}, \
  \hB = I_{6, 3}\tran B,
\]
where $I_{6, 3} \in \RR^{6 \times 3}$ are the first three columns of the
$6 \times 6$ identity matrix.
The relative $\Htwo$ error of the initial \ac{ph} \ac{rom} is $0.74487$.
The \ac{ph} \ac{rom} in the optimization process is parameterized using a vector
$x \in \RR^{9}$ as
\begin{gather*}
  \hJ =
  \begin{bmatrix}
    0 & 0    & 0   \\
    0 & 0    & x_1 \\
    0 & -x_1 & 0
  \end{bmatrix}\!, \
  \hR = \mydiag{x_2, x_3, x_3}, \
  \hB =
  \begin{bmatrix}
    x_4 & x_5 \\
    x_6 & x_7 \\
    x_8 & x_9
  \end{bmatrix}\!.
\end{gather*}
Note that every real \ac{ph} \ac{lti} system $(\hJ, \hR, \hB)$ of order~$3$
such that $\hJ - \hR$ is normal and with one real and one complex conjugate
pair of poles
is of the above form after a state space transformation
$(\hT\tran \hJ \hT, \hT\tran \hR \hT, \hT\tran \hB)$
by an orthogonal matrix $\hT$.
The result of the optimization is \iac{ph} \ac{rom} with
(rounded to $5$ significant digits)
\begin{gather*}
  \hJ - \hR =
  \begin{bmatrix}
    -0.25493 & 0        & 0        \\
    0        & -0.47415 & 0.99652  \\
    0        & -0.99652 & -0.47415
  \end{bmatrix}\!, \
  \hB =
  \begin{bmatrix}
    -0.92106 & -2.7957 \\
    1.5589   & 5.4235  \\
    1.4045   & 6.4414
  \end{bmatrix}\!,
\end{gather*}
leading to a relative $\Htwo$ error of $0.19595$.
Its poles are
\[
  \lambda_1 = -0.25493, \
  \lambda_{2, 3} = -0.47415 \pm 0.99652 \imag.
\]
Then we check the optimality conditions and find that
\begin{gather*}
  \mathrm{reldist}\myparen*{
    \myparen*{
      H\myparen*{-\overline{\lambda_1}}
      + H\myparen*{-\overline{\lambda_1}}\herm
    }
    b_1,
    \myparen*{
      \hH\myparen*{-\overline{\lambda_1}}
      + \hH\myparen*{-\overline{\lambda_1}}\herm
    }
    b_1
  }
  \approx 2.9 \times 10^{-9}, \\
  \mathrm{reldist}\myparen*{
    \myparen*{
      H\myparen*{-\overline{\lambda_{2, 3}}}
      + H\myparen*{-\overline{\lambda_{2, 3}}}\herm
    }
    b_{2, 3},
    \myparen*{
      \hH\myparen*{-\overline{\lambda_{2, 3}}}
      + \hH\myparen*{-\overline{\lambda_{2, 3}}}\herm
    }
    b_{2, 3}
  }
  \approx 5.8 \times 10^{-9}, \\
  \mathrm{reldist}\myparen*{
    b_1\herm
    H'\myparen*{-\overline{\lambda_1}}
    b_1,
    b_1\herm
    \hH'\myparen*{-\overline{\lambda_1}}
    b_1
  }
  \approx 2.5 \times 10^{-8}, \\
  \mathrm{reldist}\myparen*{
    b_{2, 3}\herm
    H'\myparen*{-\overline{\lambda_{2, 3}}}
    b_{2, 3},
    b_{2, 3}\herm
    \hH'\myparen*{-\overline{\lambda_{2, 3}}}
    b_{2, 3}
  }
  \approx 2.5 \times 10^{-8},
\end{gather*}
which is aligned with \Cref{thm:pH-interp}.
We also find that
\begin{gather*}
  \mathrm{reldist}\myparen*{
    H\myparen*{-\overline{\lambda_1}}
    b_1,
    \hH\myparen*{-\overline{\lambda_1}}
    b_1
  }
  \approx 5.8 \times 10^{-3}, \\
  \mathrm{reldist}\myparen*{
    H\myparen*{-\overline{\lambda_{2, 3}}}
    b_{2, 3},
    \hH\myparen*{-\overline{\lambda_{2, 3}}}
    b_{2, 3}
  }
  \approx 2.1 \times 10^{-2},
\end{gather*}
which demonstrates that the \ac{diagstrom} does not satisfy the unstructured
interpolatory necessary optimality conditions~\eqref{eq:lti-oc}.
The code is available at~\cite{Mli24}.

\section{Time-delay Systems}%
\label{sec:td}
Beyond standard \ac{lti} systems~\eqref{eq:lti-fom},
dynamical systems with internal and/or input/output delays
appear in many control systems (see, e.g.,~\cite{Fri14}).
Here, we consider a time-delay \ac{strom} of the form
\begin{subequations}\label{eq:td-rom}
  \begin{align}
    \dot{\hx}(t) & = \hA \hx(t) + \hA_{\tau} \hx(t - \tau) + \hB u(t), \\*
    \hy(t)       & = \hC \hx(t),
  \end{align}
\end{subequations}
where $\hA_{\tau} \in \RRrr$ and $\tau > 0$ is the internal delay.
We use $(\hA, \hA_{\tau}, \hB, \hC)$ to denote~\eqref{eq:td-rom}.
The system can contain more than one delay,
but we focus on the single-delay case for simplicity.
The transfer function of the time-delay \ac{strom}~\eqref{eq:td-rom} is given by
\begin{equation} \label{eq:delaytf}
  \hH(s) = \hC \myparen*{s \hI - \hA - e^{-\tau s} \hA_{\tau}}^{-1} \hB.
\end{equation}
We assume that $\lambda \hI - \hA - e^{-\tau \lambda} \hA_{\tau}$ is
asymptotically stable.
The goal is the same as before:
find an optimal reduced time-delay system as in~\eqref{eq:delaytf} that
minimizes the $\Htwo$ distance between the original transfer function $H$ and
the reduced $\hH$.

The works~\cite{SinGB16,AumW23} proposed \ac{irka}-type algorithms to construct
\acp{strom}, including time-delay systems.
Even though both algorithms work well in practice,
they do not guarantee $\Htwo$-optimality and
are not based on true $\Htwo$-optimality conditions.
An $\Htwo$-optimal method for time-delay system is proposed in~\cite{GomEMM19},
but this approach uses gradient-based optimization
based on the system Gramians and
does not reveal or consider optimal interpolatory conditions.
That is precisely what we establish in this section.

\subsection{Interpolatory Conditions}

For SISO systems and for the simple case of $\nrom = 1$ and $\hA = 0$
in~\eqref{eq:td-rom},
the interpolatory necessary $\Htwo$-optimality conditions were
derived in~\cite{PonGB+16}.
The approach in~\cite{PonGB+16} can be extended to diagonal MIMO time-delay
systems for an arbitrary reduced order $\nrom \ge 1$~\cite{Pon23}.
Below we derive the more general interpolatory $\Htwo$-optimality conditions,
but using our generalized optimality framework from \Cref{thm:cond-diag-s}.

For the time-delay system~\eqref{eq:td-rom},
the diagonal structure that we assume for \acp{rom}
corresponds to assuming $\hA$ and $\hA_{\tau}$ are
simultaneously diagonalizable.
Therefore, there exists an invertible matrix $\hT \in \CCrr$ such that
$\hT^{-1} \hA \hT = \hM$, and
$\hT^{-1} \hA_{\tau} \hT = \hSigma$, with
$\hM = \mydiag{\mu_1, \mu_2, \ldots, \mu_{\nrom}}$ and
$\hSigma = \mydiag{\sigma_1, \sigma_2, \ldots, \sigma_{\nrom}}$.
Then, the transfer function $\hH$ in~\eqref{eq:delaytf}
can be equivalently rewritten as
\begin{align}
  \nonumber
  \hH(s)
   & =
  \hC
  \myparen*{
    s \hT \hT^{-1}
    - \hT \hM \hT^{-1}
    - e^{-\tau s} \hT \hSigma \hT^{-1}
  }^{-1}
  \hB  \\
  \nonumber
   & =
  \hC
  \hT
  \myparen*{s \hI - \hM - e^{-\tau s} \hSigma}^{-1}
  \hT^{-1}
  \hB  \\
  \label{eq:td-pole-res-form}
   & =
  \sum_{i = 1}^{\nrom}
  \frac{c_i b_i\herm}{s - \mu_i - e^{-\tau s} \sigma_i},
\end{align}
where $c_i = \hC \hT e_i$ and $b_i = \hB\tran \hS e_i$.
To find the zeros of $s - \mu_i - e^{-\tau s} \sigma_i$,
we can use the Lambert~$W$ function
\begin{equation}\label{eq:td-poles}
  \lambda_{ij}
  =
  \mu_i
  + \frac{1}{\tau} W_j\myparen*{\tau \sigma_i e^{-\tau \mu_i}},
\end{equation}
where $\lambda_{ij}$ is the pole corresponding to the $j$th branch of the
Lambert~$W$ function (see~\cite{CepM15}).

Note that the reformulation of $\hH$ as in~\eqref{eq:td-pole-res-form}
perfectly aligns with the form of transfer functions with general denominators
in \Cref{thm:cond-diag-s}.
The next result uses this observation and applies \Cref{thm:cond-diag-s}
to the transfer function~\eqref{eq:td-pole-res-form}
to develop the interpolatory necessary $\Htwo$-optimality conditions
for MIMO time-delay systems.
\begin{theorem}\label{thm:td-cond}
  Let $H \in \Htwo^{\nout \times \nin}$, $\tau > 0$, and
  $\Sigma_{\textnormal{D-TD}}$ be the set of time-delay \acp{diagstrom}
  $(\hM, \hSigma, \hB, \hC)$ such that
  $\hM = \mydiag{\mu_i}, \hSigma = \mydiag{\sigma_i} \in \CCrr$ are diagonal,
  $\lambda \hI - \hM - e^{-\tau \lambda} \hSigma$ has all of its eigenvalues in
  $\CC_-$,
  $\hB \in \CCri$, and
  $\hC \in \CCor$.
  Let $(\hM, \hSigma, \hB, \hC)$ be an $\Htwo$-optimal time-delay \ac{diagstrom}
  for $H$ in $\Sigma_{\textnormal{D-TD}}$.
  If $\sigma_i \ne 0$ and
  $\hH$ in~\eqref{eq:td-pole-res-form} has pairwise distinct simple poles
  $\lambda_{ij}$ as in~\eqref{eq:td-poles},
  then
  \begin{subequations}\label{eq:td-cond}
    \begin{gather}
      \displaybreak[1]
      \label{eq:td-cond1}
      \myparen*{
        \sum_{j = -\infty}^{\infty}
        \overline{\phi_{ij}}
        H\myparen*{-\overline{\lambda_{ij}}}
      }
      b_i
      =
      \myparen*{
        \sum_{j = -\infty}^{\infty}
        \overline{\phi_{ij}}
        \hH\myparen*{-\overline{\lambda_{ij}}}
      }
      b_i, \\
      \displaybreak[1]
      \label{eq:td-cond2}
      c_i\herm
      \myparen*{
        \sum_{j = -\infty}^{\infty}
        \overline{\phi_{ij}}
        H\myparen*{-\overline{\lambda_{ij}}}
      }
      =
      c_i\herm
      \myparen*{
        \sum_{j = -\infty}^{\infty}
        \overline{\phi_{ij}}
        \hH\myparen*{-\overline{\lambda_{ij}}}
      }, \\
      \displaybreak[1]
      \label{eq:td-cond3}
      \begin{aligned}
         &
        c_i\herm
        \myparen*{
          \sum_{j = -\infty}^{\infty}
          \myparen*{
            \overline{\psi_{ij}}
            H'\myparen*{-\overline{\lambda_{ij}}}
            -
            \overline{\rho_{ij}}
            H\myparen*{-\overline{\lambda_{ij}}}
          }
        }
        b_i  \\*
         & =
        c_i\herm
        \myparen*{
          \sum_{j = -\infty}^{\infty}
          \myparen*{
            \overline{\psi_{ij}}
            \hH'\myparen*{-\overline{\lambda_{ij}}}
            -
            \overline{\rho_{ij}}
            \hH\myparen*{-\overline{\lambda_{ij}}}
          }
        }
        b_i, \text{ and}
      \end{aligned} \\
      \label{eq:td-cond4}
      \begin{aligned}
         &
        c_i\herm
        \myparen*{
          \sum_{j = -\infty}^{\infty}
          \myparen*{
            \myparen*{
              \overline{\phi_{ij}}
              - \overline{\psi_{ij}}
            }
            H'\myparen*{-\overline{\lambda_{ij}}}
            +
            \overline{\rho_{ij}}
            H\myparen*{-\overline{\lambda_{ij}}}
          }
        }
        b_i  \\*
         & =
        c_i\herm
        \myparen*{
          \sum_{j = -\infty}^{\infty}
          \myparen*{
            \myparen*{
              \overline{\phi_{ij}}
              - \overline{\psi_{ij}}
            }
            \hH'\myparen*{-\overline{\lambda_{ij}}}
            +
            \overline{\rho_{ij}}
            \hH\myparen*{-\overline{\lambda_{ij}}}
          }
        }
        b_i,
      \end{aligned}
    \end{gather}
  \end{subequations}
  for $i = 1, 2, \ldots, \nrom$,
  where
  \[
    \phi_{ij} = \frac{1}{1 + \tau (\lambda_{ij} - \mu_i)},\
    \psi_{ij} = \frac{1}{{(1 + \tau (\lambda_{ij} - \mu_i))}^2}, \text{ and }
    \rho_{ij} = \frac{\tau^2 (\lambda_{ij} - \mu_i)}
    {{(1 + \tau (\lambda_{ij} - \mu_i))}^3}.
  \]
\end{theorem}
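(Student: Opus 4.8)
The plan is to read the diagonal reduced-order time-delay system $(I,\hM,\hSigma,\hB,\hC)$ as a \ac{diagstrom} in the sense of \Cref{sec:l2-diag} and apply \Cref{thm:cond-diag-s} essentially verbatim, followed by a short algebraic reduction that collapses its output into the four conditions \eqref{eq:td-cond1}--\eqref{eq:td-cond4}. First I would fix the parameter-separable form: take $\pp = s$, $\pset = \imag\RR$ with Lebesgue measure, $\Br = \hB$, $\Cr = \hC$ (non-parametric), and write
\[
  \Ar(s) = s I - \hM - e^{-\tau s}\hSigma
  = \car_1(s)\cAr_1 + \car_2(s)\cAr_2 + \car_3(s)\cAr_3
\]
with $\car_1(s)=s$, $\car_2(s)=1$, $\car_3(s)=e^{-\tau s}$ and diagonal matrices $\cAr_1 = I$, $\cAr_2 = -\hM$, $\cAr_3 = -\hSigma$, so $\qAr = 3$. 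The $\car_i$ are entire, hence analytic on $\CC_+$; and, as already noted in the text, invertibility of $\hE$ (here $I$) together with asymptotic stability of $\lambda I - \hM - e^{-\tau\lambda}\hSigma$ give \eqref{eq:abc-l2-bounded} and \eqref{eq:ainv-bounded}, so the hypotheses of \Cref{thm:grad}, \Cref{cor:cond-diag}, and \Cref{thm:cond-diag-s} all hold. The $\ell$th diagonal entry of $\Ar(s)$ is $a_\ell(s) = s - \mu_\ell - e^{-\tau s}\sigma_\ell$, whose zero set $\Lambda_\ell \subset \CC_-$ is enumerated over $j\in\ZZ$ by the Lambert-$W$ branches $\lambda_{\ell j}$; the standing hypothesis that $\hH$ has pairwise distinct simple poles makes every zero of each $a_\ell$ simple and the $\Lambda_\ell$ pairwise disjoint, as \Cref{thm:cond-diag-s} requires, and identifies the sums $\sum_{j=-\infty}^{\infty}$ appearing in the statement with the sums $\sum_{\lambda\in\Lambda_\ell}$ of \Cref{thm:cond-diag-s}.

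Second, I would assemble the residue data. At a zero $\lambda = \lambda_{\ell j}$ one has $e^{-\tau\lambda}\sigma_\ell = \lambda - \mu_\ell$, hence $a_\ell'(\lambda) = 1 + \tau e^{-\tau\lambda}\sigma_\ell = 1 + \tau(\lambda - \mu_\ell)$ and $a_\ell''(\lambda) = -\tau^2(\lambda - \mu_\ell)$, so that $1/a_\ell'(\lambda_{\ell j}) = \phi_{\ell j}$, $1/a_\ell'(\lambda_{\ell j})^2 = \psi_{\ell j}$, and $-a_\ell''(\lambda_{\ell j})/a_\ell'(\lambda_{\ell j})^3 = \rho_{\ell j}$, with $\phi,\psi,\rho$ exactly as in the statement; the identity $\tau(\lambda_{\ell j}-\mu_\ell) = 1/\phi_{\ell j}-1$ will be the workhorse for re-expressing $\lambda_{\ell j}$-dependent weights through $\phi_{\ell j},\psi_{\ell j},\rho_{\ell j}$. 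Substituting this $(\car_i,a_\ell)$ into \Cref{thm:cond-diag-s}: the ``pole--residue'' form \eqref{eq:pole-res-h2-diag} is precisely \eqref{eq:td-pole-res-form}; conditions \eqref{eq:cond-diag-s-1}--\eqref{eq:cond-diag-s-2}, after writing $1/\overline{a_\ell'(\lambda_{\ell j})} = \overline{\phi_{\ell j}}$, become \eqref{eq:td-cond1}--\eqref{eq:td-cond2}; and the $i=2$ instance of \eqref{eq:cond-diag-s-3} (where $\car_2\equiv 1$, $\car_2'\equiv 0$) collapses directly to \eqref{eq:td-cond3}.

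Third, I would produce \eqref{eq:td-cond4} from the $i=1$ (equivalently $i=3$) instance of \eqref{eq:cond-diag-s-3}. The key structural observation is that the three bitangential instances are not independent: since $\sum_i(\cAr_i)_{\ell\ell}\car_i \equiv a_\ell$ and $a_\ell(\lambda_{\ell j}) = 0$, forming $\sum_i (\cAr_i)_{\ell\ell}$ times the $i$th instance of \eqref{eq:cond-diag-s-3} telescopes (using $\sum_i(\cAr_i)_{\ell\ell}\car_i' = a_\ell'$ and $1/a_\ell'(\lambda_{\ell j}) = \phi_{\ell j}$) to exactly $c_\ell\herm$ times \eqref{eq:td-cond1} times $b_\ell$ — so modulo \eqref{eq:td-cond1}--\eqref{eq:td-cond3} only one genuinely new bitangential condition remains. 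To extract it cleanly, take the $i=1$ instance of \eqref{eq:cond-diag-s-3} (for which the $H'$-weight at $\lambda_{\ell j}$ is $\overline{\lambda_{\ell j}\psi_{\ell j}}$ and the $H$-weight is $-\overline{\psi_{\ell j} + \lambda_{\ell j}\rho_{\ell j}}$), subtract $\overline{\mu_\ell}$ times \eqref{eq:td-cond3}, scale by $\tau$, and add $\tau$ times the $c_\ell\herm(\,\cdot\,)b_\ell$ restriction of \eqref{eq:td-cond2}; using $\tau(\lambda_{\ell j}-\mu_\ell)=1/\phi_{\ell j}-1$ to rewrite the $\lambda_{\ell j}$-dependent weights through $\phi_{\ell j}$ and $\psi_{\ell j}$, the $H'$-weight becomes $\overline{\phi_{\ell j}}-\overline{\psi_{\ell j}}$ and the $H$-weight becomes $\overline{\rho_{\ell j}}$, which is \eqref{eq:td-cond4}. (Alternatively one can process the $i=3$ instance with $\car_3(s)=e^{-\tau s}$ and eliminate $e^{-\tau\lambda_{\ell j}} = (\lambda_{\ell j}-\mu_\ell)/\sigma_\ell$; this is equivalent by the dependence just noted.)

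The conceptually delicate point — convergence of the residue sums over the infinitely many right-half-plane poles $\{-\overline{\lambda_{\ell j}}\}_{j\in\ZZ}$ and the legitimacy of closing the contour in the integral-to-residue step — is already discharged inside \Cref{thm:cond-diag-s} once its hypotheses are verified (analyticity of the $\car_i$ on $\CC_+$, zeros confined to $\CC_-$, simplicity, disjointness, and the Lambert-$W$ list actually exhausting $\Lambda_\ell$), so I do not expect to revisit it. I expect the real obstacle to be the third step: the bookkeeping needed to re-express every $\lambda_{\ell j}$- and $e^{-\tau\lambda_{\ell j}}$-dependent weight through $\phi_{\ell j},\psi_{\ell j},\rho_{\ell j}$ and to pin down the exact linear combination of the $i$-indexed conditions — including the conjugate $\overline{\mu_\ell}$ that appears because $\hM$ is only complex-diagonal, not real. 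This is elementary but error-prone, and getting the three-into-two reduction right is where care is required.
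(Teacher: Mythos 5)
Your proposal is correct and follows essentially the same route as the paper: both identify the time-delay \ac{rom} with the parameter-separable diagonal form, instantiate \Cref{thm:cond-diag-s} with $a_i(s) = s - \mu_i - e^{-\tau s}\sigma_i$, and use $a_i(\lambda_{ij})=0$ to rewrite $a_i'$, $a_i''$, and the coefficient functions at the poles in terms of $\phi_{ij},\psi_{ij},\rho_{ij}$, with \eqref{eq:td-cond1}--\eqref{eq:td-cond3} falling out exactly as you describe. The only divergence is in obtaining \eqref{eq:td-cond4}: the paper evaluates the bitangential condition for the coefficient $\car_3(s)=-e^{-\tau s}$ directly (eliminating $e^{-\tau\lambda_{ij}}$ via $a_i(\lambda_{ij})=0$ so that the constant $-\tfrac{1}{\tau\sigma_i}$ factors out of both weights), whereas you take the $\car_1(s)=s$ instance and combine it linearly with \eqref{eq:td-cond2} and \eqref{eq:td-cond3} — your algebra checks out and you correctly flag the paper's choice as the equivalent alternative, so this is a cosmetic rather than substantive difference.
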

\begin{proof}
  In the setting of \Cref{thm:cond-diag-s}, we have
  $\car_1(s) = s$,
  $\car_2(s) = -1$,
  $\car_3(s) = -e^{-\tau s}$, and
  $a_i(s) = s - \mu_i - e^{-\tau s} \sigma_i$.
  Existence of the admissible contours needed in \Cref{thm:cond-diag-s} is
  guaranteed by~\cite[Theorem~12.13]{BelC63} and~\cite[Theorem~2.2]{ZwaCPG88}.
  Note that
  \begin{subequations}
    \begin{align}
      \label{eq:td-a-lambda}
      a_i(\lambda_{ij})
       & = \lambda_{ij} - \mu_i - e^{-\tau \lambda_{ij}} \sigma_i
      = 0,                                                        \\
      \label{eq:td-a-prime-lambda}
      a_i'(\lambda_{ij})
       & = 1 + \tau e^{-\tau \lambda_{ij}} \sigma_i
      = 1 + \tau (\lambda_{ij} - \mu_i),                          \\
      \label{eq:td-a-prime-prime-lambda}
      a_i''(\lambda_{ij})
       & = -\tau^2 e^{-\tau \lambda_{ij}} \sigma_i
      = -\tau^2 (\lambda_{ij} - \mu_i),                           \\
      \label{eq:td-alpha3-lambda}
      \car_3(\lambda_{ij})
       & = -e^{-\tau \lambda_{ij}}
      = -\frac{\lambda_{ij} - \mu_i}{\sigma_i},                   \\
      \label{eq:td-alpha3-prime-lambda}
      \car_3'(\lambda_{ij})
       & = \tau e^{-\tau \lambda_{ij}}
      = \frac{\tau (\lambda_{ij} - \mu_i)}{\sigma_i},
    \end{align}
  \end{subequations}
  where we used~\eqref{eq:td-a-lambda} in the later expressions.
  Using~\eqref{eq:td-a-prime-lambda},~\eqref{eq:td-a-prime-prime-lambda},
  $\car_2(\lambda_{ij}) = -1$, and $\car_2'(\lambda_{ij}) = 0$,
  we find
  \begin{equation}\label{eq:td-res1}
    \frac{1}{a_i'(\lambda_{ij})} = \phi_{ij},  \quad
    \frac{\car_2(\lambda_{ij})}{{a_i'(\lambda_{ij})}^2} = -\psi_{ij}, \quad
    \frac{\car_2'(\lambda_{ij})}{{a_i'(\lambda_{ij})}^2}
    - \frac{\car_2(\lambda_{ij}) a_i''(\lambda_{ij})}{{a_i'(\lambda_{ij})}^3}
    = -\rho_{ij}.
  \end{equation}
  Next, together with
  $\phi_{ij} - \psi_{ij}
    = \frac{\tau (\lambda_{ij} - \mu_i)}{{a_i'(\lambda_{ij})}^2}$,
  expression~\eqref{eq:td-alpha3-lambda} gives
  \begin{equation}\label{eq:td-cond4-coeff1}
    \frac{\car_3(\lambda_{ij})}{{a_i'(\lambda_{ij})}^2}
    = -\frac{\lambda_{ij} - \mu_i}{\sigma_i {a_i'(\lambda_{ij})}^2}
    = -\frac{1}{\tau \sigma_i} (\phi_{ij} - \psi_{ij}).
  \end{equation}
  Then, using~\eqref{eq:td-a-prime-lambda}, \eqref{eq:td-a-prime-prime-lambda},
  \eqref{eq:td-alpha3-lambda}, and~\eqref{eq:td-alpha3-prime-lambda} leads to
  \begin{align}
    \nonumber
    \frac{
      \car_3'(\lambda_{ij})
    }{
      {a_i'(\lambda_{ij})}^2
    }
    - \frac{
      \car_3(\lambda_{ij})
      a_i''(\lambda_{ij})
    }{
      {a_i'(\lambda_{ij})}^3
    }
     & =
    \frac{
      \tau
      (\lambda_{ij} - \mu_i)
    }{
      \sigma_i
      {(1 + \tau (\lambda_{ij} - \mu_i))}^2
    }
    -
    \frac{
    \tau^2
    {(\lambda_{ij} - \mu_i)}^2
    }{
    \sigma_i
    {(1 + \tau (\lambda_{ij} - \mu_i))}^3
    }    \\
    \nonumber
     & =
    \frac{\tau}{\sigma_i}
    \cdot
    \frac{
      (\lambda_{ij} - \mu_i)
      (1 + \tau (\lambda_{ij} - \mu_i))
      -
      \tau
      {(\lambda_{ij} - \mu_i)}^2
    }{
      {(1 + \tau (\lambda_{ij} - \mu_i))}^3
    }    \\
    \nonumber
     & =
    \frac{\tau}{\sigma_i}
    \cdot
    \frac{
      \lambda_{ij} - \mu_i
    }{
      {(1 + \tau (\lambda_{ij} - \mu_i))}^3
    }    \\
    \label{eq:td-cond4-coeff2}
     & =
    \frac{1}{\tau \sigma_i}
    \rho_{ij}.
  \end{align}
  Using~\eqref{eq:td-res1},
  conditions~\eqref{eq:cond-diag-s-1} and~\eqref{eq:cond-diag-s-2}
  yield~\eqref{eq:td-cond1} and~\eqref{eq:td-cond2}, respectively.
  Furthermore, the condition~\eqref{eq:cond-diag-s-3} with $\car_2$
  implies~\eqref{eq:td-cond3}.
  Inserting~\eqref{eq:td-cond4-coeff1}
  and~\eqref{eq:td-cond4-coeff2} into the condition~\eqref{eq:cond-diag-s-3}
  with $\car_3$ implies~\eqref{eq:td-cond4}.
\end{proof}
Not surprisingly, the interpolatory optimality conditions for time-delay systems
appear much more involved than the earlier ones for second-order systems and
\ac{ph} systems.
The single interpolation conditions are replaced by an (infinite) weighted sum,
reflecting the fact that time-delay systems have infinitely many poles
(identified via the Lambert W function).
Moreover, the Hermite conditions in this case appear as mixed conditions
where a linear combination of $\hH$ and $\hH'$ needs to be interpolated.
However, one aspect stays the same:
interpolation needs to happen at the mirror images of the poles.

Note that we can replace the condition~\eqref{eq:td-cond4} by
the addition of~\eqref{eq:td-cond4} and~\eqref{eq:td-cond3},
which simply becomes
\begin{equation*}
  c_i\herm
  \myparen*{
    \sum_{j = -\infty}^{\infty}
    \overline{\phi_{ij}}
    H'\myparen*{-\overline{\lambda_{ij}}}
  }
  b_i
  =
  c_i\herm
  \myparen*{
    \sum_{j = -\infty}^{\infty}
    \overline{\phi_{ij}}
    \hH'\myparen*{-\overline{\lambda_{ij}}}
  }
  b_i.
\end{equation*}

\subsection{Numerical Example}

We demonstrate the results of \Cref{thm:td-cond} on the \ac{fom} with
\begin{gather*}
  A =
  \begin{bmatrix}
    -\frac{1}{10} & 0             & 0             \\
    0             & -\frac{1}{10} & 1             \\
    0             & -1            & -\frac{1}{10}
  \end{bmatrix}\!, \
  A_{\tau} =
  \begin{bmatrix}
    -\frac{1}{10} & 0             & 0             \\
    0             & -\frac{1}{10} & \frac{1}{10}  \\
    0             & -\frac{1}{10} & -\frac{1}{10}
  \end{bmatrix}\!, \\
  B =
  \begin{bmatrix}
    1 & 1 & 1 \\
    1 & 2 & 2 \\
    1 & 3 & 1
  \end{bmatrix}\!, \
  C =
  \begin{bmatrix}
    1 & 1 & 1 \\
    1 & 2 & 3
  \end{bmatrix}\!,
\end{gather*}
where $A$ was chosen to have three eigenvalues close to the imaginary axis
(one real and one complex pair),
while $A_{\tau}$ has a similar structure and is sufficiently small to ensure
that the poles of the \ac{fom} have negative real parts.
We then find \iac{diagstrom} of order $\nrom = 2$ using Nelder-Mead initialized
with
\[
  \hA = I_{3, 2}\tran A I_{3, 2}, \
  \hA_{\tau} = I_{3, 2}\tran A_{\tau} I_{3, 2}, \
  \hB = I_{3, 2}\tran B, \
  \hC = C I_{3, 2},
\]
where $I_{3, 2} \in \RR^{3 \times 2}$ are the first two columns of the
$3 \times 3$ identity matrix.
The relative $\Htwo$ error of the initial \ac{diagstrom} is $1.0237$
(computed using numerical quadrature).
The \ac{diagstrom} in Nelder-Mead is parameterized using a vector
$x \in \RR^{14}$ as
\begin{gather*}
  \hA =
  \begin{bmatrix}
    x_1 & 0   \\
    0   & x_2
  \end{bmatrix}\!, \
  \hA_{\tau} =
  \begin{bmatrix}
    x_3 & 0   \\
    0   & x_4
  \end{bmatrix}\!, \
  \hB =
  \begin{bmatrix}
    x_5 & x_6 & x_7    \\
    x_8 & x_9 & x_{10}
  \end{bmatrix}\!, \
  \hC =
  \begin{bmatrix}
    x_{11} & x_{12} \\
    x_{13} & x_{14}
  \end{bmatrix}\!.
\end{gather*}
The resulting optimal model is a \ac{diagstrom}
(rounded to $5$ significant digits)
\begin{gather*}
  \hA =
  \begin{bmatrix}
    0.44909 & 0       \\
    0       & 0.17539
  \end{bmatrix}\!, \
  \hA_{\tau} =
  \begin{bmatrix}
    -1.2372 & 0       \\
    0       & -1.2408
  \end{bmatrix}\!, \\
  \hB =
  \begin{bmatrix}
    4.3506  & 12.827  & 4.2756  \\
    -11.793 & -26.685 & -17.774
  \end{bmatrix}\!, \
  \hC =
  \begin{bmatrix}
    0.15039 & -0.08129 \\
    0.32807 & -0.19743
  \end{bmatrix}\!,
\end{gather*}
with relative $\Htwo$ error of $0.589$.
Its $8$ poles with largest real parts are
\begin{equation*}
  -0.03289 \pm 1.1843 \imag, \
  -0.11458 \pm 1.3609 \imag, \
  -1.8458 \pm 7.5938 \imag, \
  -1.8541 \pm 7.5582 \imag.
\end{equation*}
Then we find that the relative errors in~\eqref{eq:td-cond} are between
$1.8 \times 10^{-9}$ and $8.4 \times 10^{-8}$
(we truncate the double sums from $-10^5$ to $10^5$),
thus numerically validating the optimality conditions.
The code is available at~\cite{Mli24}.

\section{Conclusion}%
\label{sec:conclusion}
We have developed interpolatory $\Htwo$-optimality
conditions for approximating non-parametric structured dynamical systems, namely
for second-order, \ac{ph}, and time-delay systems.
We have shown that bitangential Hermite interpolation is the common unifying
framework across all these different settings.
In this paper, we have mainly focused on the theoretical analysis of deriving
the optimality conditions.
A natural future direction would be to develop (iterative) numerical methods,
such as \ac{irka}, that directly use the interpolation conditions discussed here
as opposed to the gradient-based iterative optimization methods employed in the
literature.



\bibliographystyle{alphaurl}
\bibliography{my}
\addcontentsline{toc}{chapter}{References}
\end{document}